\newtheorem{thm}{Theorem}[section]
\newtheorem{cor}[thm]{Corollary}
\newtheorem{lem}[thm]{Lemma}
\newtheorem{prop}[thm]{Proposition}
\theoremstyle{definition}
\newtheorem{defn}[thm]{Definition}
\theoremstyle{remark}
\newtheorem{rem}[thm]{Remark}
\newtheorem{ex}{Example}
\numberwithin{equation}{section}
\renewcommand{\emptyset}{\varnothing}
\DeclareMathOperator{\p}{p} \DeclareMathOperator{\ap}{ap} \DeclareMathOperator{\ess}{ess}
\DeclareMathOperator{\cc}{c} \DeclareMathOperator{\rr}{r}
\newcommand{\R}{\ensuremath{\mathbb R}}
\newcommand{\C}{\ensuremath{\mathbb C}}
\newcommand{\N}{\ensuremath{\mathbb N}}
\newcommand{\Comp}{\ensuremath{\mathbb C}}
\newcommand{\product}{[\cdot\,,\cdot]}
\newcommand{\calK}{\mathcal K}
\newcommand{\calR}{\mathcal R}
\newcommand{\calU}{\mathcal U}
\newcommand{\calX}{\mathcal X}         
\newcommand{\calY}{\mathcal Y}
\newcommand{\la}{\lambda}
\newcommand{\ol}{\overline}
\renewcommand{\Im}{\operatorname{Im}}
\newcommand{\dist}{\operatorname{dist}}
\renewcommand{\ker}{\operatorname{ker}}
\newcommand{\ran}{\operatorname{ran}}
\newcommand{\dom}{\operatorname{dom}}
\newcommand{\Llra}{\Longleftrightarrow}
\newcommand{\wto}{\rightharpoonup}
\newcommand{\adj}{{\mathsmaller[*\mathsmaller]}}
\newcommand{\sap}{\sigma_{\ap}}
\newcommand{\spp}{\sigma_{++}}
\newcommand{\spip}{\sigma_{\pi_+}}
\newcommand{\spim}{\sigma_{\pi_-}}
\newcommand{\smm}{\sigma_{--}}
\begin{document}

%
%
%
%
%
%
%
%

%
\title[Local Definitizability of $T^\adj T$ and $TT^\adj$]
{Local Definitizability of $T^\adj T$ and $TT^\adj$}

\author[Philipp]{Friedrich Philipp}
\address{
Technische Universit\"{a}t Ilmenau\\
Institut f\"{u}r Mathematik\\
Postfach 100565\\
D-98684 Ilmenau\\
Germany} \email{friedrich.philipp@tu-ilmenau.de}

\author[Ran]{Andr\'e C.M.\ Ran}
\address{
Afdeling Wiskunde\\
Faculteit der Exacte Wetenschappen\\
Vrije Universiteit Amsterdam\\
De Boelelaan 1081a\\
1081 HV Amsterdam\\
The Netherlands } \email{ran@few.vu.nl}

\author[Wojtylak]{Micha\l{} Wojtylak}
\address{
Instytut Matematyki\\
Uniwersytet Jagiello\'{n}ski\\
ul.\ \L{}ojasiewicza 6\\
30--348 Krak\'{o}w\\
Poland } \email{michal.wojtylak@gmail.com}

\subjclass{47A10; 47B50}

\keywords{Krein space, definitizable operator, critical point, spectrum}

\date{December 18, 2009}

\thanks{The first author gratefully acknowledges support from the
Deutsche For\-schungs\-ge\-meinschaft (DFG), grant BE 3765/5-1 TR 904/4-1. The third author was supported by
the EU Sixth Framework Programme for the Transfer of Knowledge ``Operator theory methods for differential
equations'' (TODEQ) \# MTKD-CT-2005-030042, he also thanks the Faculty of Sciences of the VU University
Amsterdam, where the research was partially carried out during his Post-Doc stay.}

\begin{abstract}
The spectral properties of two products $AB$ and $BA$ of possibly unbounded operators $A$ and $B$ in a Banach
space are considered. The results are applied in the comparison of local spectral properties of the operators
$T^\adj T$ and $TT^\adj$ in a Krein space.  It is shown that under the assumption that both operators $T^\adj
T$ and $TT^\adj$ have  non-empty resolvent sets, the operator $T^\adj T$ is locally definitizable if and only
if $TT^\adj$ is. In this context the critical points of both operators are compared.
\end{abstract}

\maketitle

\section*{Introduction}
In the paper \cite{rw} three conditions on a closed and densely defined operator $T$ in a Krein space $\calK$
were considered:
\begin{itemize}
\item[(t1)] $T^\adj T$ and $TT^\adj$ are selfadjoint operators in $\calK$;
\item[(t2)] $T^\adj T$ and $TT^\adj$ have non-empty resolvent sets;
\item[(t3)] $T^\adj T$ is definitizable.
\end{itemize}
Under these conditions the spectral properties of the operators $T^\adj T$ and $TT^\adj$ were compaired. In
particular, it was shown, that if (t1)--(t3) are satisfied, then the non-zero spectra, as well as the
non-zero singular and the non-zero regular critical points, respectively, of $T^\adj T$ and $TT^\adj$
coincide. On the other hand, there were given counterexamples, showing that for the point zero the same will
not be true.

The present contribution can be regarded as a continuation of the paper \cite{rw}, although it also contains
some more general results which we consider to be of independent interest. The body of the paper consists of
two sections. In the first one we consider the spectra of the products $AB$ and $BA$ of two arbitrary linear
operators $A$ and $B$ acting between Banach spaces. We give a simple proof of a theorem from \cite{hkm},
saying that the non-zero spectra of $AB$ and $BA$ are equal, provided the resolvent sets of $AB$ and $BA$ are
non-empty. Moreover, we show that not only the non-zero spectra of $AB$ and $BA$ coincide, but also the most
prevalent types of spectra. At the same time we establish some mutual estimates on the norms of the
resolvents of $AB$ and $BA$.

The second part of the paper is devoted entirely to the situation when $A=T$ is a closed, densely defined
operator in a Krein space and $B=T^\adj$ is its Krein space adjoint. There are two main goals. The first one
is showing that (t2) implies (t1)  and is accomplished in Theorem \ref{t:rho->sa}. Our main tool is here a
Banach-space result from \cite{hm}. The second goal is to prove analogues of central results of \cite{rw}
assuming -- instead of (t3) -- that the operator $T^\adj T$ is definitizable only over a set $\Omega$ (see
Definition \ref{d:locdef}). First, we provide a natural correspondence between the sign types of the spectra
of $T^\adj T$ and $TT^\adj$ (Proposition \ref{p:spectra}). Later on, this fact is used in the proof of
Theorem \ref{t:locdef}. This theorem states that under condition (t2) the operator $T^\adj T$ is
definitizable over a set $\Omega$  if and only if $TT^\adj$ is. This was proved already in \cite{rw} for
$\Omega=\ol\Comp$, since  definitizability over $\ol\Comp$ is equivalent to definitizability. However, in the
present situation we cannot use the technique of definitizing polynomials as in \cite{rw}. Instead, we have
to tackle the problem by comparing the local sign type properties of the spectra of $T^\adj T$ and $TT^\adj$.
In this setting we also prove the equality of the sets on non-zero critical points of $T^\adj T$ and
$TT^\adj$ (Theorem \ref{critical}), which also has its analogue in \cite{rw}.  The following simple example
shows that all these generalizations are substantial, i.e. locally definitizable but not definitizable
operators of the form $T^\adj T$ do exist.


\begin{ex}\label{e:one}
Let $(T_n)_{n=0}^\infty$  be a bounded sequences of linear operators in $\C^2$, and let the fundamental
symmetry $J_0=\begin{pmatrix} 0 & 1\\1 & 0\end{pmatrix}$ determine the indefinite inner product on $\C^2$.
Suppose additionally, that for each $n\in\N$ the operator $T_n^\adj T_n$ (and thus also $T_n T_n^\adj$) has
exactly one (real) eigenvalue $\lambda_n$ and that the sequence $(\lambda_n)_{n=1}^\infty$ is strictly
decreasing to zero\footnote{E.g. $T_n=\begin{pmatrix} 1/n & 0\\0 & 1/n\end{pmatrix}U_n$ with any $U_n$
satisfying $U_n^\adj=U_n^{-1}$ }. In the space $\ell^2(\C^2)$ we consider the operator $T$ and the
fundamental symmetry $J$, defined by
$$
T = \bigoplus_{n=1}^\infty T_n,\qquad  J=\bigoplus_{n=1}^\infty J_0.
$$
Then the operators $T^\adj T$ and $TT^\adj$ are given by
$$
T^\adj T = \bigoplus_{n=1}^\infty\,T_n^\adj T_n \quad\text{ and }\quad TT^\adj = \bigoplus_{n=1}^\infty\,T_n
T_n^\adj
$$
and they satisfy (t1) and (t2) as bounded operators. Note that the algebraic eigenspace of each of the
operators  $T^\adj T$ and $TT^\adj$ corresponding to the eigenvalue $\lambda_n$ ($n\in\N$) is two-dimensional
and indefinite. Therefore, both operators are (locally) definitizable over $\ol\C\setminus\{0\}$, but not
definitizable (over $\ol\C$).
\end{ex}

For a history of the problem of comparing the operators $T^\adj T$ and $TT^\adj$ and its relation with
indefinite polar decompositions we refer the reader to the papers \cite{rw,rw2}. At this point we only
mention that the finite dimensional instance has found a complete solution in terms of canonical forms, see
\cite{mmx,rw2}.

\section{On the pair of operators $AB$ and $BA$ in Banach spaces}
We start this section by recalling some definitions and notions concerning the spectrum of a linear operator.
Let $\calX$ be a Banach space. The algebra of all bounded linear operators $T : \calX\to \calX$ will be
denoted by $L(\calX)$. Let $T$ be a linear operator in $\calX$ with domain $\dom T\subset \calX$. By
$\rho(T)$ we denote the {\it resolvent set of $T$} which is the set of all points $\la\in\C$ for which the
operator $T - \la : \dom T\to \calX$ is bijective and  $(T - \la)^{-1}\in L(\calX)$. Note that according to
this definition of $\rho(T)$ the operator $T$ is closed if its resolvent set is non-empty. The {\it spectrum
of $T$} is defined by $\sigma(T) := \C\setminus\rho(T)$. We define the {\it point spectrum} $\sigma_{\p}(T)$
as the set of eigenvalues of $T$.

Throughout this section we assume that $\calX$ and $\calY$ are Banach spaces, $A$ is a closed and densely
defined operator acting from $\dom A\subset\calX$ to $\calY$, and  $B$ is a closed and densely defined
operator acting from $\dom B\subset\calY$ to $\calX$. Note that the following lemma is based on linear
algebra only.

\begin{lem}\label{l:point_AB}
For $n\in\N$ and $\la\in\C\setminus\{0\}$ the operator $A$ maps $\ker((BA - \la)^n)$ bijectively onto
$\ker((AB - \la)^n)$. In particular, we have
$$
\sigma_{\p}(BA)\setminus\{0\} = \sigma_{\p}(AB)\setminus\{0\}.
$$
\end{lem}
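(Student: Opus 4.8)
The plan is to prove the bijection statement directly for each fixed $n$ and $\la\neq 0$, and then extract the point-spectrum equality as the case $n=1$. The key idea is that $A$ should carry $\ker((BA-\la)^n)$ into $\ker((AB-\la)^n)$, and that $B$ (suitably normalized) should provide the inverse map. Everything reduces to the intertwining identities $A(BA)=(AB)A$ and $B(AB)=(BA)B$ on the appropriate domains.

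\emph{Step 1: $A$ maps the kernel into the target kernel.} First I would verify the intertwining relation: on a vector $x$ lying in the relevant domains one has $(AB-\la)^n A x = A\,(BA-\la)^n x$. The cleanest way to see this is to expand $(AB-\la)^n$ and $(BA-\la)^n$ binomially and check the monomial identity $(AB)^k A = A (BA)^k$, which follows by an easy induction from $ABA = A(BA) = (AB)A$. A point here that I would be careful about is domain bookkeeping: if $x\in\ker((BA-\la)^n)$, then the vectors $(BA-\la)^j x$ for $j<n$ all lie in $\dom(BA)\subseteq\dom A$, so each application of $A$ is legitimate and the identity holds verbatim. Consequently, if $(BA-\la)^n x = 0$ then $(AB-\la)^n (Ax)=0$, i.e. $A$ maps $\ker((BA-\la)^n)$ into $\ker((AB-\la)^n)$. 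The symmetric computation shows $B$ maps $\ker((AB-\la)^n)$ into $\ker((BA-\la)^n)$.

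\emph{Step 2: the two maps are mutually inverse up to the scalar $\la$.} The crucial use of $\la\neq 0$ comes here. On $\ker((BA-\la)^n)$ I want to show that $BA$ is invertible, with the map $A$ followed by $B$ recovering $\la\cdot x$. Indeed, restricted to the finite-dimensional-in-spirit kernel, $BA-\la$ is nilpotent, so $BA$ acts invertibly (its only possible eigenvalue on this kernel is $\la\neq 0$); more concretely, expanding $0=(BA-\la)^n x$ shows $BA\cdot p(BA)x = \la^n x$ for a suitable polynomial $p$, which exhibits $x$ as lying in the range of $BA$ and shows $BA$ is a bijection of the kernel onto itself. Therefore $B(Ax) = (BA)x$ is, up to composing with the inverse of $BA|_{\ker}$, a scalar multiple recovering $x$; combined with the symmetric statement this yields that $A\colon\ker((BA-\la)^n)\to\ker((AB-\la)^n)$ and $B\colon\ker((AB-\la)^n)\to\ker((BA-\la)^n)$ are inverse to each other after rescaling by $\la$, hence both are bijections.

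\emph{Step 3: conclusion.} Bijectivity of $A$ between the generalized kernels is exactly the displayed claim. Taking $n=1$ shows that $\la\in\sigma_{\p}(BA)\setminus\{0\}$ iff $\ker(BA-\la)\neq\{0\}$ iff (via the bijection) $\ker(AB-\la)\neq\{0\}$ iff $\la\in\sigma_{\p}(AB)\setminus\{0\}$, giving $\sigma_{\p}(BA)\setminus\{0\}=\sigma_{\p}(AB)\setminus\{0\}$.

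The main obstacle I anticipate is not conceptual but the careful handling of domains: since $A$ and $B$ are unbounded, I must ensure that each vector I feed into $A$ or $B$ genuinely lies in the respective domain at every stage of the binomial expansion. The nonzero scalar $\la$ is what makes Step 2 work, since it forces $BA$ (equivalently $AB$) to restrict to an invertible operator on the generalized eigenspace, and this is precisely where the hypothesis $\la\neq 0$ cannot be dropped.
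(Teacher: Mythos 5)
Your proof is correct and takes essentially the same route as the paper: the intertwining relation $ABAx=\la Ax$ shows that $A$ carries one kernel into the other, and $\la^{-1}B$ restricted to $\ker((AB-\la)^n)$ (together with the invertibility of $BA$ on the generalized eigenspace, which is where $\la\neq 0$ enters) provides the inverse. The only cosmetic difference is that the paper proves the case $n=1$ and disposes of general $n$ by induction, whereas you argue for general $n$ directly via the binomial expansion; the domain bookkeeping you flag is exactly the point on which the paper's one-line argument silently relies.
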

\begin{proof}
We prove   the statement for $n=1$ only, the case of arbitrary $n$ follows by induction. Let
$\la\in\C\setminus\{0\}$ and let  $x\in\ker(BA - \la)$. Then from $BAx = \la x$ we conclude that $BAx\in\dom
A$ and $ABAx = \la Ax$. Hence, $Ax\in\ker(AB - \la)$. It is now easy to check, that $\lambda^{-1}B|\ker(AB -
\la)$ is the  inverse of $A|\ker(BA - \la)$.
\end{proof}

Observe the following simple consequence of the closed graph theorem.

\begin{lem}\label{domin}
If the operators $B$ and $AB$ are closed then there exists $c > 0$ such that
$$
\|Bx\|\leq c(\| ABx \| +\|x\|),\quad x\in\dom(AB).
$$
\end{lem}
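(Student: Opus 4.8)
The plan is to endow the domain $\dom(AB)$ with the graph norm of $AB$ and to recognize the desired inequality as nothing but the boundedness of $B$ viewed on this space, which will then follow from the closed graph theorem.

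First I would introduce on $\dom(AB)$ the norm
$$
\|x\|_{AB} := \|x\| + \|ABx\|, \qquad x\in\dom(AB).
$$
Since $AB$ is assumed to be closed, the normed space $(\dom(AB),\|\cdot\|_{AB})$ is complete, hence a Banach space: a sequence that is Cauchy in $\|\cdot\|_{AB}$ is Cauchy in $\calX$ and its image under $AB$ is Cauchy in $\calY$, and closedness of $AB$ identifies the two limits as $x$ and $ABx$ for some $x\in\dom(AB)$. The target space $\calX$ is a Banach space as well, so the abstract setting for the closed graph theorem is in place. Note also that $\dom(AB)\subset\dom B$, so that $B$ is genuinely defined on all of $\dom(AB)$, and it remains only to verify that the graph of $B$, regarded as an operator from $(\dom(AB),\|\cdot\|_{AB})$ into $\calX$, is closed.

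For the closedness of the graph I would take a sequence $(x_n)$ in $\dom(AB)$ with $x_n\to x$ in $\|\cdot\|_{AB}$ and $Bx_n\to y$ in $\calX$, and show $Bx=y$. By definition of the graph norm, convergence $x_n\to x$ in $\|\cdot\|_{AB}$ yields in particular $x_n\to x$ in $\calX$; combining this with $Bx_n\to y$ and invoking the closedness of $B$, I obtain $x\in\dom B$ and $Bx=y$, which is exactly the required closedness. With both spaces complete and the graph closed, the closed graph theorem delivers a constant $c>0$ with $\|Bx\|\le c\,\|x\|_{AB}=c(\|x\|+\|ABx\|)$ for all $x\in\dom(AB)$, which is the assertion.

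The argument is short, and there is no serious obstacle; the only point demanding care is the completeness of $(\dom(AB),\|\cdot\|_{AB})$, which is precisely where the hypothesis that $AB$ is closed enters and without which the scheme collapses. The closedness of $B$, by contrast, is used solely to check that the graph is closed. It is worth keeping the two roles separate, since the lemma is stated with both closedness assumptions and each is consumed by a distinct step.
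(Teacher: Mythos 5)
Your proof is correct and is precisely the argument the paper intends: the lemma is stated there as ``a simple consequence of the closed graph theorem,'' and your verification---completeness of $\dom(AB)$ under the graph norm from closedness of $AB$, and closedness of the graph of $B$ on that space from closedness of $B$---fills in exactly the expected details.
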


The above phenomenon is called domination (of $B$ by $AB$). See \cite{css,ss,w} and the literature quoted
therein for an extensive research on domination in Hilbert spaces.

The first statement of the theorem below (equality \eqref{HMM}) has already been proved by V.\ Hardt and R.\
Mennicken in \cite{hm} with the use of an operator matrix construction. Nevertheless, we present a simpler
proof. It is worth mentioning that the formulas for $(BA-\lambda)^{-1}$ agree with those for the bounded
case, to be found e.g. in \cite[Problem 76]{h}. The estimate \eqref{e:res_estimate_AB} plays an important
role in the second part of the paper.

\begin{thm}\label{t:rho_AB}
Assume that the resolvent sets $\rho(AB)$ and $\rho(BA)$ of the operators $AB$ and $BA$ are non-empty. Then
we have
\begin{equation}\label{HMM}
\sigma(AB)\setminus\{0\} = \sigma(BA)\setminus\{0\}.
\end{equation}
Moreover, for $\la\in\rho(AB)\setminus\{0\}$ and $\mu\in\rho(BA)$ the following connection between the
resolvents of $AB$ and $BA$ holds:
\begin{align}\label{ppp}
(BA - \la)^{-1} =& \la^{-1} [\overline{B (AB - \la)^{-1} A} - I]\\
 = &\la^{-1}\left( \mu + (\la - \mu)B(AB - \la)^{-1}A \right)(BA - \mu)^{-1}.\label{resolvents}
\end{align}
Consequently, there exists a constant $C > 0$, which  depends on $A$ and $B$ only, such that for
$\la,\mu\in\rho(BA)$, $\la\neq 0$, the following inequality is satisfied
\begin{equation}\label{e:res_estimate_AB}
\|(BA - \la)^{-1}\| \le \frac{CM_1(\lambda)M_2(\mu)}{|\la|}\,\Big( |\mu| + |\la -
\mu|\,(2+|\la|)(2+|\mu|)\Big),
\end{equation}
with $M_1(\lambda) := \max\{1,\|(AB - \la)^{-1}\|\}$ and $M_2(\mu) := \max\{1,\|(BA - \mu)^{-1}\|\}$.
\end{thm}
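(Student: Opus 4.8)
The plan is to build the resolvent of $BA$ explicitly out of that of $AB$, letting the domination Lemma \ref{domin} absorb all the unboundedness. First I would record two boundedness facts. Write $R_\la:=(AB-\la)^{-1}$ for $\la\in\rho(AB)\setminus\{0\}$. Since $R_\la$ maps $\calY$ into $\dom(AB)\subseteq\dom B$, the operator $BR_\la$ is everywhere defined on $\calY$; moreover $\ran R_\la=\dom(AB)$ consists of vectors $z$ with $Bz\in\dom A$, so $\ran(BR_\la)\subseteq\dom A$ and $ABR_\la=(AB-\la)R_\la+\la R_\la=I+\la R_\la$. Applying Lemma \ref{domin} to $z=R_\la y$ gives $\|BR_\la y\|\le c(\|ABR_\la y\|+\|R_\la y\|)$, whence $BR_\la\in L(\calY,\calX)$ with $\|BR_\la\|\le cM_1(\la)(2+|\la|)$. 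Running the same argument with the roles of $A$ and $B$ interchanged (here $A$ is closed by hypothesis and $BA$ is closed because $\rho(BA)\neq\emptyset$) shows that $A(BA-\mu)^{-1}$ is bounded on $\calX$ with $\|A(BA-\mu)^{-1}\|\le c'M_2(\mu)(2+|\mu|)$ for $\mu\in\rho(BA)$.

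Next I would prove \eqref{resolvents} and the spectral identity \eqref{HMM} together. Fix $\la\in\rho(AB)\setminus\{0\}$ and $\mu\in\rho(BA)$ and set $G:=\la^{-1}(\mu+(\la-\mu)BR_\la A)(BA-\mu)^{-1}$, which is bounded by the previous paragraph. The key point is that $G$ maps into $\dom(BA)$: for $w\in\calX$ put $x=(BA-\mu)^{-1}w\in\dom(BA)$; then $Ax\in\dom B$, and using $ABR_\la=I+\la R_\la$ one finds $A(BR_\la Ax)=Ax+\la R_\la Ax\in\dom B$, so $BR_\la Ax\in\dom(BA)$. Consequently $BA(BR_\la Ax)=BAx+\la BR_\la Ax$, hence $(BA-\la)BR_\la Ax=BAx$, and a short computation then yields $(BA-\la)Gw=w$. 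Thus $BA-\la$ is surjective; it is injective because a nonzero eigenvalue would, by Lemma \ref{l:point_AB}, be an eigenvalue of $AB$ as well, contradicting $\la\in\rho(AB)$. Therefore $\la\in\rho(BA)$ and $G=(BA-\la)^{-1}$, which is \eqref{resolvents}. Interchanging $A$ and $B$ (legitimate since both resolvent sets are assumed nonempty) gives the reverse inclusion, and \eqref{HMM} follows.

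To obtain the closed form \eqref{ppp} I would verify $BR_\la Ax=x+\la(BA-\la)^{-1}x$ for every $x\in\dom A$. The decisive observation is a cancellation: from $A(BR_\la Ax)=Ax+\la R_\la Ax$ we get $A(BR_\la Ax-x)=\la R_\la Ax\in\dom B$, so $BR_\la Ax-x\in\dom(BA)$ even when $x\notin\dom(BA)$, and then $(BA-\la)(BR_\la Ax-x)=\la BR_\la Ax-\la(BR_\la Ax-x)=\la x$. This proves the identity, exhibits $BR_\la A$ as the restriction to the dense set $\dom A$ of the bounded operator $I+\la(BA-\la)^{-1}$, and identifies its closure, giving \eqref{ppp}. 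Finally, \eqref{e:res_estimate_AB} is read off by taking norms in \eqref{resolvents}: bound $\|BR_\la A(BA-\mu)^{-1}\|\le\|BR_\la\|\,\|A(BA-\mu)^{-1}\|$, insert the two estimates of the first paragraph together with $|\mu|\,\|(BA-\mu)^{-1}\|\le|\mu|M_2(\mu)$, and absorb the domination constants $c,c'$ into a single $C$ depending only on $A$ and $B$.

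I expect the main obstacle to be the domain bookkeeping rather than the algebra: at each step one must check that the vectors produced lie in the domains on which the identities used — chiefly $(AB-\la)R_\la=I$, $ABR_\la=I+\la R_\la$, and the membership $\ran(BR_\la)\subseteq\dom A$ — are genuinely valid. Two points make the unbounded case succeed: first, $\ran(BR_\la)\subseteq\dom A$, which is what lets $A$ act on $BR_\la Ax$ at all; and second, the cancellation placing $BR_\la Ax-x$ in $\dom(BA)$. Establishing the boundedness of $BR_\la$ and of $A(BA-\mu)^{-1}$ via Lemma \ref{domin} is then exactly what guarantees that the operators assembled in \eqref{ppp}, \eqref{resolvents} and \eqref{e:res_estimate_AB} are bounded and everywhere defined.
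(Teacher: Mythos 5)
Your proof is correct and follows essentially the same route as the paper's: the same algebraic identities, the same use of Lemma \ref{domin} to bound $B(AB-\la)^{-1}$ and $A(BA-\mu)^{-1}$, and the same candidate formulas for the resolvent of $BA$. The only (cosmetic) difference is that you obtain surjectivity of $BA-\la$ directly by checking that the bounded operator in \eqref{resolvents} is a right inverse mapping into $\dom(BA)$, whereas the paper first establishes \eqref{ppp} on $\dom A$ and then argues that the bounded, closed, densely defined inverse must be everywhere defined.
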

\begin{proof}
Let $\la\in\rho(AB)\setminus\{0\}$. By $(BA - \la)^{-1}$ we denote the inverse of $BA - \la$ which exists due
to Lemma \ref{l:point_AB} and maps $\ran(BA - \la)$ bijectively onto $\dom(BA)$. Consider the operator $R_\la
: \dom A\to\calX$, defined by
$$
R_\la x := \la^{-1} [B (AB - \la)^{-1} Ax - x],\quad x\in\dom A.
$$
For $x\in\dom A$ we obtain
$$
A R_\la x = \la^{-1} [(AB-\la+\la) (AB - \la)^{-1} Ax - Ax]=(AB - \la)^{-1} Ax\in\dom B
$$
and hence
$$
(BA - \la)\,R_\la x = x.
$$
In particular,
\begin{equation}\label{e:dom_in_ran_AB}
\dom A\,\subset\,\ran(BA - \la)
\end{equation}
and
\begin{equation}\label{e:Rla_AB}
R_\la = (BA - \la)^{-1}|\dom A.
\end{equation}
Choose $\mu$ in the resolvent set of $BA$. Using \eqref{e:dom_in_ran_AB} and \eqref{e:Rla_AB} we obtain for
$x\in\ran(BA - \la)$
\begin{align}
\begin{split}\label{e:res_by_res}
(BA - \la)^{-1}x
&= (BA - \mu)^{-1}x + (\la - \mu)R_\la\,(BA - \mu)^{-1}x\\
&= \mu\la^{-1}(BA - \mu)^{-1}x\\
&\hspace{1cm}+ (\la - \mu)\la^{-1}\,[B(AB - \la)^{-1}]\,[A(BA - \mu)^{-1}]x.
\end{split}
\end{align}
Since the operators $B(AB - \la)^{-1}$ and $A(BA - \mu)^{-1}$ are bounded due to the closed graph theorem,
$(BA - \la)^{-1}$ is bounded as well. Since it is also closed and densely defined by \eqref{e:dom_in_ran_AB},
we obtain $\la\in\rho(BA)$, which proves \eqref{HMM}. The formulas  \eqref{ppp} and \eqref{resolvents} now
follow from \eqref{e:Rla_AB} and \eqref{e:res_by_res}, respectively.

Observe that by Lemma \ref{domin} and the triangle inequality,
$$
\|B(AB - \la)^{-1}\| \le c_1\big( \|(AB - \la)^{-1}\| + \|AB(AB - \la)^{-1}\| \big) \le c_1 M_1(\lambda)
(2+|\la|).
$$
Interchanging the roles of $A$ and $B$ we obtain for  $\mu\in\rho(BA)$
$$
\|A(BA - \mu)^{-1}\| \le c_2 M_2(\mu) (2 + |\mu|).
$$
Now, it is easy to see that these estimates, together with \eqref{resolvents}, imply
\eqref{e:res_estimate_AB} with $C := \max\{1,c_1c_2\}$.
\end{proof}

For a proof of the following proposition see \cite[Remark 2.5]{hkm} and \cite[Corollary 1.7]{hm}.

\begin{prop}\label{r:AB_star}
Let $\rho(AB)$ and $\rho(BA)$ be non-empty. Then $AB$ and $BA$ are densely defined. Moreover, we have
$$
(AB)' = B'A'\quad\text{ and }\quad(BA)' = A'B',
$$
where $'$ denotes the Banach space adjoint of densely defined linear operators in $\calX$ or in $\calY$ or
between these spaces. In consequence, if $\calX$ and $\calY$ are Hilbert spaces then
$$
(AB)^* = B^*A^*\quad\text{ and }\quad(BA)^* = A^*B^*.
$$
\end{prop}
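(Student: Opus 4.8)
The plan is to prove Proposition \ref{r:AB_star} in three stages: first establish dense definedness of $AB$ and $BA$, then derive the adjoint formula for the Banach space adjoint, and finally specialize to the Hilbert space setting.

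First I would establish that $AB$ and $BA$ are densely defined. Since $\rho(AB)$ is nonempty, the relation \eqref{e:dom_in_ran_AB} from the proof of Theorem \ref{t:rho_AB} tells us that $\dom A \subset \ran(BA - \la)$ for $\la \in \rho(AB)\setminus\{0\}$; symmetrically $\dom B \subset \ran(AB - \mu)$. Because $A$ is densely defined, $\dom A$ is dense in $\calX$, and combined with the fact that $(BA-\la)^{-1}$ is bounded, I can transport density through the resolvent. More directly: if $\la\in\rho(BA)$, then $\dom(BA) = \ran((BA-\la)^{-1})$, and since $(BA - \la)^{-1}$ is a bounded bijection onto $\dom(BA)$ with dense range $\dom A$ lying in its domain, the density of $\dom(BA)$ follows. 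The same argument with $A$ and $B$ interchanged handles $AB$.

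The main content is the adjoint formula. The inclusion $B'A' \subset (AB)'$ is the easy, formal direction and holds quite generally for composable densely defined operators: if $y^* \in \dom(A')$ with $A' y^* \in \dom(B')$, then for all $x \in \dom(AB)$ one checks $\lk (AB)x, y^*\rk = \lk Bx, A'y^*\rk = \lk x, B'A'y^*\rk$, so $y^*\in\dom((AB)')$ with $(AB)'y^* = B'A'y^*$. The hard part will be the reverse inclusion $(AB)' \subset B'A'$, where domination and the nonempty resolvent hypotheses become essential. Here I would exploit the resolvent identity: for $\la\in\rho(AB)\setminus\{0\}$, the operator $B(AB-\la)^{-1}$ is everywhere-defined and bounded (shown in the proof of Theorem \ref{t:rho_AB} via Lemma \ref{domin}), and equality \eqref{HMM} gives $\la\in\rho(BA)$ as well. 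Taking Banach space adjoints of the bounded resolvent relation \eqref{ppp}, together with the standard fact that adjoints reverse bounded products and that $((AB-\la)^{-1})' = ((AB)'-\la)^{-1}$, should pin down $\dom((AB)')$ and force any element of it into $\dom(A')$ with image in $\dom(B')$. The domination inequality is precisely what guarantees that the relevant intermediate operator $B(AB-\la)^{-1}$ is bounded, so its adjoint is everywhere defined and no domain pathology spoils the factorization.

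Finally, the Hilbert space statement follows immediately. When $\calX$ and $\calY$ are Hilbert spaces, the Banach space adjoint $'$ and the Hilbert space adjoint $*$ coincide up to the conjugate-linear Riesz identification, and that identification intertwines composition in the expected way; hence $(AB)' = B'A'$ transcribes to $(AB)^* = B^*A^*$, and likewise $(BA)^* = A^*B^*$, with no further work.
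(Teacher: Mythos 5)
The paper itself offers no proof of this proposition: it refers to \cite[Remark 2.5]{hkm} and \cite[Corollary 1.7]{hm}, where the result is obtained by passing to the off-diagonal block operator $S=\smallmat{0}{B}{A}{0}$ in $\calX\times\calY$, whose square is $BA\oplus AB$, and reading off density of the domains and the adjoint identities from the corresponding facts for $S$. Your self-contained attempt is therefore a different route in principle, but it has a genuine gap at the very first step, the density of $\dom(AB)$ and $\dom(BA)$. You argue that $(BA-\la)^{-1}$ is a bounded bijection of $\calX$ onto $\dom(BA)$ and that $\dom A$ is dense in $\calX$, and conclude that $\dom(BA)$ is dense; this inference is false. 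A bounded injective operator maps a dense subset of its domain to a set dense in its \emph{range}, not in the ambient space: the shift $x\mapsto(0,x_1,x_2,\dots)$ on $\ell^2$ is a bounded bijection onto a non-dense closed subspace, and every dense subset of $\ell^2$ lies in its domain. Appealing to \eqref{e:dom_in_ran_AB} does not help either, since $\dom A\subset\ran(BA-\la)$ is automatic once $\la\in\rho(BA)$ (then $\ran(BA-\la)=\calX$); what your argument would need is $\dom A\subset\dom(BA)$, which is the reverse of the true inclusion $\dom(BA)\subset\dom A$. A correct direct argument requires an extra idea. For instance, fix $\la\in\rho(AB)\cap\rho(BA)$, $\la\neq0$ (such $\la$ exists by \eqref{HMM} and openness of resolvent sets). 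For $v\in\dom(AB)$ choose $w_n\in\dom B$ with $w_n\to ABv$ and set $u_n:=(AB-\la)^{-1}(w_n-\la v)$; then $ABu_n=w_n-\la v+\la u_n\in\dom B$, hence $Bu_n\in\dom(BA)$, and $Bu_n\to Bv$ because $B(AB-\la)^{-1}$ is bounded, so $B\,\dom(AB)\subset\ol{\dom(BA)}$. Combining this with the identity $x=B(AB-\la)^{-1}Ax-\la(BA-\la)^{-1}x$ for $x\in\dom A$ (a rearrangement of \eqref{ppp}) places the dense set $\dom A$ inside $\ol{\dom(BA)}$.

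The remainder of your plan is sound in outline but not yet a proof. The inclusion $B'A'\subset(AB)'$ is indeed formal, and the converse can be carried out along the lines you indicate, but the decisive computations are left at the level of ``should pin down.'' Concretely, for $y'\in\dom((AB)')$ and $z':=((AB)'-\la)y'$ one uses the identities $(AB-\la)^{-1}Ax=A(BA-\la)^{-1}x$ on $\dom A$ and $A(BA-\la)^{-1}By=y+\la(AB-\la)^{-1}y$ on $\dom B$, together with boundedness of $A(BA-\la)^{-1}$, to verify first that $y'\in\dom(A')$ with $A'y'=(A(BA-\la)^{-1})'z'$ and then that $A'y'\in\dom(B')$ with $B'A'y'=z'+\la y'=(AB)'y'$. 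Alternatively, it suffices to show $\la\in\rho(B'A')$: since $B'A'-\la\subset(AB)'-\la$ and the larger operator is injective while the smaller is surjective, equality of the two operators follows at once. Either way, this part is completable; the density step is the one that, as written, fails.
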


Let $T$ be a closed and densely defined linear operator in a Banach space $\mathcal{X}$. The {\it
approximative point spectrum} $\sap(T)$ of $T$ is the set of all complex numbers $\la$ for which there exists
a sequence $(x_n)_{n=0}^\infty\subset\dom T$ with $\|x_n\| = 1$  and $(T - \la)x_n \to 0$ as $n\to\infty$.
Obviously, $\sap(T)$ is a subset of the spectrum of $T$. Note that
$$
\la\notin\sap(T)\quad\Llra\quad\ker(T-\la) = \{0\}\;\text{ and }\;\ran(T-\la)\,\text{ is closed}.
$$
The {\it continuous spectrum} $\sigma_{\cc}(T)$ and the {\it residual spectrum } $\sigma_{\textrm{r}}(T)$ of
$T$ are defined as usual. The operator $T$ is called {\it upper} ({\it lower}) {\it semi-Fredholm} if
$\ran\,T$ is closed and $\ker\,T$ is finite-dimensional (resp. $\ran\,T$ is finite-codimensional). The
operator $T$ is called {\it Fredholm} if it is both upper and lower semi-Fredholm. Note that $T$ is upper
(lower) semi-Fredholm if and only if $T'$ is lower (resp.\ upper) semi-Fredholm. The {\it essential spectrum
of $T$} is defined by
$$
\sigma_{\ess}(T) := \{\la\in\C : \,T - \la\;\text{ is not Fredholm}\,\}.
$$

\begin{thm}\label{p:spectra_AB}
Let $\rho(AB)$ and $\rho(BA)$ be non-empty. Then for $\la\in\C\setminus\{0\}$ the following statements hold:
\begin{itemize}
\item[(i)]   $\ran(AB - \la)$ is closed if and only if $\ran(BA - \la)$ is closed;
\item[(ii)]  $\ran(AB - \la)$ is dense in $\calY$ if and only if $\ran(BA - \la)$ is dense in $\calX$;
\item[(iii)] $AB-\lambda$ is upper  semi-Fredholm if and only if $BA-\lambda$ is upper semi-Fredholm;
\item[(iv)]  $AB-\la$ is lower semi-Fredholm if and only if $BA-\lambda$ is lower semi-Fredholm.
\end{itemize}
In consequence,
$$\begin{array}{cc}
\sap(AB)\setminus\{0\} = \sap(BA)\setminus\{0\}, &\sigma_{\cc}(AB)\setminus\{0\} =
\sigma_{\cc}(BA)\setminus\{0\},\\
\sigma_{\rr}(AB)\setminus\{0\} = \sigma_{\rr}(BA)\setminus\{0\}, &\sigma_{\ess}(AB)\setminus\{0\} =
\sigma_{\ess}(BA)\setminus\{0\}
\end{array}.
$$
\end{thm}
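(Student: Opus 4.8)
The plan is to separate the \emph{kernel} information (which is purely linear-algebraic, via Lemma \ref{l:point_AB}) from the \emph{range} information, and to reduce the range questions to bounded operators by regularising with a resolvent. Fix $\lambda\in\C\setminus\{0\}$ and choose $\mu\in\rho(AB)\cap\rho(BA)$ with $\mu\neq0$ and $\mu\neq\lambda$; such a $\mu$ exists because $\rho(AB)$ is open and nonempty and, by \eqref{HMM}, $\rho(AB)\setminus\{0\}=\rho(BA)\setminus\{0\}$. Put $\alpha:=\mu-\lambda\neq0$, $P:=(AB-\mu)^{-1}$, $Q:=(BA-\mu)^{-1}$, and $G:=I+\alpha P\in L(\calY)$, $H:=I+\alpha Q\in L(\calX)$. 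The starting point is the factorisation $AB-\lambda=G\,(AB-\mu)$ on $\dom(AB)$, which, since $AB-\mu$ is onto, yields $\ran(AB-\lambda)=\ran G$ and $\ker(AB-\lambda)=\ker G=:M$; symmetrically $\ran(BA-\lambda)=\ran H$ and $\ker(BA-\lambda)=\ker H=:N$. Thus every assertion about ranges and kernels of $AB-\lambda$ and $BA-\lambda$ becomes one about the bounded operators $G$ and $H$. Statement (ii) I would then dispatch by duality: by Proposition \ref{r:AB_star} both operators are densely defined, $\ran(AB-\lambda)$ is dense iff $(AB-\lambda)'=B'A'-\lambda$ is injective, i.e. $\lambda\notin\sigma_{\p}(B'A')$, and likewise $\ran(BA-\lambda)$ is dense iff $\lambda\notin\sigma_{\p}(A'B')$; applying the algebraic Lemma \ref{l:point_AB} to the pair $(B',A')$ gives $\sigma_{\p}(B'A')\setminus\{0\}=\sigma_{\p}(A'B')\setminus\{0\}$, and (ii) follows.

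The heart of the matter is range-closedness, statement (i). Here the obstruction is that the natural intertwiner $A$ is unbounded, so one cannot simply transport an approximate-null sequence from one side to the other. My remedy is to intertwine $G$ and $H$ through the \emph{bounded} operators $\wt A:=A(BA-\mu)^{-1}$ and $\wt B:=B(AB-\mu)^{-1}$ (bounded by the closed graph theorem, as in Theorem \ref{t:rho_AB}). Using the resolvent intertwining $A(BA-\mu)^{-1}=(AB-\mu)^{-1}A$ on $\dom A$ (the same manipulation as in Theorem \ref{t:rho_AB}) one checks $\wt A H=G\wt A$, $\wt B G=H\wt B$, and the key product identities
\begin{equation*}
\wt B\wt A=Q+\mu Q^{2},\qquad \wt A\wt B=P+\mu P^{2}.
\end{equation*}
Now assume $\ran G$ closed and, for contradiction, $\ran H$ not closed; then there are $v_n$ with $\dist(v_n,N)=1$, $\|v_n\|\le2$ and $Hv_n\to0$. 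Set $y_n:=\wt A v_n$, so that $Gy_n=\wt A Hv_n\to0$ and $(y_n)$ is bounded. It suffices to show $\dist(y_n,M)$ stays bounded away from $0$, which contradicts closedness of $\ran G$. If instead $\dist(y_n,M)\to0$ along a subsequence, write $y_n=m_n+o(1)$ with $m_n\in M$; since $\wt B m=-\alpha^{-1}Bm$ and $B$ carries $\ker(AB-\lambda)$ into $\ker(BA-\lambda)$ (Lemma \ref{l:point_AB}), the operator $\wt B$ maps $M$ into $N$, whence $\dist(\wt B\wt A v_n,N)\to0$. On the other hand $Hv_n\to0$ gives $Qv_n=-\alpha^{-1}v_n+o(1)$, hence $Q^{2}v_n=\alpha^{-2}v_n+o(1)$, so that
\begin{equation*}
\wt B\wt A v_n=Qv_n+\mu Q^{2}v_n=\frac{\lambda}{\alpha^{2}}\,v_n+o(1).
\end{equation*}
As $\lambda/\alpha^{2}\neq0$ and $\dist(v_n,N)=1$, this forces $\dist(\wt B\wt A v_n,N)\to|\lambda|/|\alpha|^{2}>0$, a contradiction. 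Hence $\ran H$ is closed, and the symmetric argument (interchanging $A$ and $B$) finishes (i).

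Finally (iii) and (iv) are assembled from (i) together with two kernel counts. For the upper case, $\dim\ker(AB-\lambda)=\dim\ker(BA-\lambda)$ by Lemma \ref{l:point_AB}, so, given (i), $AB-\lambda$ has closed range and finite-dimensional kernel iff $BA-\lambda$ does. For the lower case, once the ranges are closed one has $\codim\ran(AB-\lambda)=\dim\ker(B'A'-\lambda)$ and $\codim\ran(BA-\lambda)=\dim\ker(A'B'-\lambda)$, and these agree by Lemma \ref{l:point_AB} applied to $(B',A')$ exactly as for (ii). The displayed equalities of $\sap$, $\sigma_{\cc}$, $\sigma_{\rr}$ and $\sigma_{\ess}$ off $0$ then follow by combining (i)--(iv) with the usual descriptions of these spectra in terms of injectivity, denseness, closedness of range, and the Fredholm property. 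I expect the bounded-below transfer in (i) --- specifically deriving the lower bound on $\dist(y_n,M)$ from the identity $\wt B\wt A=Q+\mu Q^{2}$ --- to be the only genuinely delicate point; everything else reduces to Lemma \ref{l:point_AB}, duality, or the resolvent bookkeeping already present in Theorem \ref{t:rho_AB}.
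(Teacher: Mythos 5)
Your proof is correct, and its core---part (i)---is at bottom the same argument as the paper's, just repackaged. The paper also fixes an auxiliary $\mu\in\rho(BA)\setminus\{0,\la\}$, takes a sequence $x_n$ with $\dist(x_n,\ker(BA-\la))=1$ and $(BA-\la)x_n\to0$, transports it with $A(BA-\mu)^{-1}$, and gets its contradiction from the computation $B(AB-\mu)^{-1}Ax_n=x_n+\mu(BA-\mu)^{-1}x_n$, which modulo the kernel is a nonzero multiple of $x_n$---exactly the role your identity $\wt B\wt A=Q+\mu Q^{2}$ plays (your limit $\la\alpha^{-2}v_n$ is the same cancellation). Your reduction to the bounded operators $G=I+\alpha P$, $H=I+\alpha Q$, with $\ran G=\ran(AB-\la)$ and $\ker G=\ker(AB-\la)$, is a clean way to organize the bookkeeping that the paper instead handles on the unbounded level via the domination Lemma \ref{domin}. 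Where you genuinely diverge is (ii): the paper proves density of $\ran(BA-\la)$ constructively, exhibiting for each $x\in\dom A$ an explicit sequence $u_n$ with $(BA-\la)u_n\to x$, whereas you argue by duality, using that $\ran(S-\la)$ is dense iff $\ker(S'-\la)=\{0\}$ together with Lemma \ref{l:point_AB} applied to the adjoint pair $(B',A')$. Your route is shorter and simultaneously yields the $\sigma_{\rr}$ statement, but it leans on Proposition \ref{r:AB_star} (quoted from the literature) and on the observation---worth stating explicitly---that Lemma \ref{l:point_AB} is purely algebraic and therefore applies to $B',A'$ even though these need not be densely defined in a non-reflexive Banach space; the paper's argument is self-contained. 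For (iv) both proofs pass to adjoints: you count $\codim\ran$ as the dimension of the kernel of the adjoint, the paper invokes the upper/lower semi-Fredholm duality, which amounts to the same thing. All the individual steps I checked---the factorization $AB-\la=G(AB-\mu)$, the intertwining $\wt AH=G\wt A$, the product identities, the fact that $\wt B$ maps $\ker G$ into $\ker H$, and the existence of the auxiliary $\mu$---are valid.
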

\begin{proof}
Obviously, it is sufficient to prove only one of the implications in each of the points (i)--(iv).

(i) Assume that $\ran(BA - \la)$ is not closed. Then there exists a sequence
$(x_n)_{n=0}^\infty\subset\dom(BA)$ with\footnote{Indeed, consider the quotient Banach space $X/\ker(BA-\la)$
and the injective operator $C$ that maps the equivalence class $f+\ker(BA-\la)$ ($f\in\dom(BA)$) to
$(BA-\la)f$. Then, the range of $BA - \la$ coincides with the range of $C$ and the latter is closed if and
only if $C$ is bounded from below.}
\begin{equation}\label{e:dist}
\dist\left(x_n,\ker(BA - \la)\right) = 1 \quad\text{ and }\quad (BA - \la)x_n\to 0\;\text{ as }\;n\to\infty.
\end{equation}
Fix $\mu\in\rho(BA)\setminus\{0,\la\}$  and set
$$
y_n := (\la - \mu)(BA - \mu)^{-1}x_n.
$$
Then $y_n\in\dom((BA)^2)$ for every $n\in\N$ and
\begin{equation}\label{e:xnyn_AB}
(BA - \la)y_n = (\la - \mu)\big( x_n + (\mu - \la)(BA - \mu)^{-1}x_n \big) = (\la - \mu)(x_n - y_n).
\end{equation}
On the other hand, we have
\begin{equation}\label{two-seven}
(BA - \la)y_n = (\la - \mu)(BA - \mu)^{-1}(BA - \la)x_n\;\to\,0\ \textrm{ as }\ n\to\infty.
\end{equation}
Consequently, $\|x_n - y_n\|\to 0$ as $n\to\infty$. Furthermore,  \eqref{e:xnyn_AB} gives
\begin{align*}
BA(BA - \la)y_n &= (\la - \mu)BA(x_n - y_n),
\end{align*}
which also tends to zero as $n\to\infty$, by \eqref{e:dist} and \eqref{two-seven}. By Lemma \ref{domin} we
have
\begin{equation}\label{doublecross}
(AB - \la)Ay_n = A(BA - \la)y_n\;\to\,0\quad\text{as }\,n\to\infty.
\end{equation}
Now we show that
\begin{equation}\label{star}
\liminf_{n\to\infty}\dist\left(Ay_n,\ker(AB - \la)\right) > 0,
\end{equation}
which will prove that  $\ran(AB - \la)$ is not closed. Let us suppose that \eqref{star} is not true. Without
loss of generality we can assume that
 \begin{equation}\label{spades} \dist\left(Ay_n,\ker(AB -
\la)\right)\to 0\quad\text{ as }\quad n\to\infty.
\end{equation}
From \eqref{e:xnyn_AB} and \eqref{doublecross} we obtain
$$
Ax_n - Ay_n = \frac 1 {\la - \mu}\,(AB - \la)Ay_n\to 0,
$$
and consequently (cf. \eqref{spades}) $\dist\left(Ax_n,\ker(AB - \la)\right)\to 0$ as $n\to\infty$. This
implies that there exists a sequence $(u_n)_{n=0}^\infty\subset\ker(AB - \la)$ with $\|Ax_n - u_n\|\to 0$ as
$n\to\infty$. Since
$$
(BA - \la)B(AB - \mu)^{-1}u_n = B(AB - \la)(AB - \mu)^{-1}u_n = 0,
$$
we have $B(AB-\mu)^{-1}u_n\in\ker(BA-\la)$. As  $B(AB-\mu)^{-1}$ is bounded we get
$$
\dist\left(B(AB - \mu)^{-1}Ax_n,\ker(BA - \la)\right)\to 0\quad\text{with }\,n\to\infty.
$$
In view of
\begin{align*}
B(AB - \mu)^{-1}Ax_n &= BA(BA - \mu)^{-1}x_n=\\
 x_n + \mu(BA - \mu)^{-1}x_n &= x_n - y_n + \frac\la{\la - \mu}\,y_n
%
\end{align*}
together with $\|x_n - y_n\|\to 0$ as $n\to\infty$   we conclude that
 $$
\dist\left(x_n,\ker(BA - \la)\right)\  \to\  0\ \text{  as }\ n\to\infty,
$$
which is a contradiction to \eqref{e:dist}.

(ii) Let  $\ran(AB - \la)$ be dense in $\calY$ and let $x\in\dom A$ be arbitrary. We will show that
$x\in\overline{\ran(BA-\la)}$, which will finish the proof of (ii). By assumption there exists a sequence
$(v_n)_{n=0}^\infty\subset\dom(AB)$ such that $(AB - \la)v_n\to Ax$ as $n\to\infty$. Fix
$\mu\in\rho(BA)\setminus\{0\}$. We  show now that  $(BA - \la)u_n\to x$ as $n\to\infty$ where
$$
u_n := \la^{-1}(BA - \mu)^{-1}\big( (\la - \mu)Bv_n + \mu x \big) \in \dom(BA).
$$
To obtain this, observe first that for every $u\in\dom A$ we have
$$
(BA - \mu)^{-1}u = R_\mu u := \mu^{-1}\big( B(AB - \mu)^{-1}Au - u \big)
$$
(see  Theorem \ref{t:rho_AB}). Thus
\begin{align*}
(BA - \la)(BA - \mu)^{-1}u
&= BA(BA - \mu)^{-1}u - \la(BA - \mu)^{-1}u\\
&= B(AB - \mu)^{-1}Au - \frac{\la}{\mu}\left(B(AB - \mu)^{-1}Au - u\right)\\
&= \frac{1}{\mu}\left((\mu - \la)B(AB - \mu)^{-1}Au + \la u\right).
\end{align*}
Substituting $u:=u_n$ ($n\in\N$)  above and using the fact that $R_\mu B \subset B(AB - \mu)^{-1}$ we obtain
\begin{align*}
(BA \ &-\  \la)u_n
= \frac{1}{\la\mu}\left((\mu - \la)B(AB - \mu)^{-1}A + \la\right)\big( (\la - \mu)Bv_n + \mu x \big)\\
=& \ x + \frac{\la - \mu}{\la\mu}\big( (\mu - \la)B(AB - \mu)^{-1}ABv_n + \la Bv_n- \mu B(AB - \mu)^{-1}Ax \big)\\
=& \ x + \frac{\la - \mu}{\la\mu}\big( \mu B(AB - \mu)^{-1}(ABv_n - Ax) - \la\mu R_\mu Bv_n \big)\\
=& \ x + \frac{\la - \mu}{\la}B(AB - \mu)^{-1}\big( (AB - \la)v_n - Ax \big),
\end{align*}
which tends to $x$ as $n\to\infty$, since $B(AB - \mu)^{-1}$ is bounded.

Point (iii) is an easy consequence of (i) and  Lemma \ref{l:point_AB}. To see that (iv) holds suppose that
$AB - \la$ is lower semi-Fredhom. Then $(AB - \la)'$  is upper semi-Fredholm.  On the other hand  the latter
operator equals  $B'A' - \la$, cf. Proposition \ref{r:AB_star}. Since $\rho(B'A') = \rho(AB)\neq\emptyset$
and $\rho(A'B') = \rho(BA)\neq\emptyset$ we conclude that $A'B' - \la$ is upper semi-Fredholm. Consequently,
$BA - \la$ is lower semi-Fredholm. The remainder of the theorem follows directly from (i)--(iv) and Lemma
\ref{l:point_AB}.
\end{proof}

\section{Local spectral properties of $T^\adj T$ and $TT^\adj$}

For an introduction to Krein spaces and operators acting therein we refer to the monographs \cite{ai} and
\cite{b} and also to \cite{l}. Throughout this section  $(\calK,\product)$ will be a Krein space and
$\|\cdot\|$ will be a Banach space norm on $\calK$, such that the indefinite inner product is continuous with
respect $\|\cdot\|$. All such norms are equivalent and the calculations below do not depend on the choice of
one of these norms.

In what follows $T$ stands for a closed, densely defined linear operator in $\calK$. The adjoint of $T$ with
respect to $\product$ will be denoted by $T^\adj$. Observe that if $T^\adj T\in L(\calK)$ then $T\in
L(\calK)$ as well, by the closed graph theorem. Let us also note that the operator $T^\adj T$ is symmetric,
although not necessarily densely defined, cf.\ \cite[Section 3]{rw}. This was a reason for introducing in
\cite{rw} the additional assumptions (t1)--(t3), quoted in the introduction, on the operator $T$. It turns
out that assuming (t1) is not necessary.

\begin{thm}\label{t:rho->sa}
If $T$ satisfies  {\rm (t2)} then it satisfies  {\rm (t1)} as well.
\end{thm}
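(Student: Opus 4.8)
The plan is to reduce the whole statement to the Hilbert space machinery of Section~1, exploiting the fact that the Krein adjoint is only a bounded, involutive twist of the Hilbert adjoint. First I would fix a fundamental symmetry $J$ on $\calK$ and equip $\calK$ with the associated Hilbert space inner product $(x,y):=[Jx,y]$; then $J=J^*=J^{-1}$, and for every closed densely defined operator $S$ one has $S^\adj=JS^*J$, where $S^*$ is the Hilbert space adjoint. Since $T$ is closed and densely defined, so is $T^*$, and because $J$ is bounded with bounded inverse it follows that $T^\adj=JT^*J$ is closed and densely defined as well. Thus the pair $A:=T^\adj$, $B:=T$ consists of two closed densely defined operators on the Hilbert space $\calK$, with $AB=T^\adj T$ and $BA=TT^\adj$.

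Next I would invoke hypothesis (t2): it asserts precisely that $\rho(AB)=\rho(T^\adj T)$ and $\rho(BA)=\rho(TT^\adj)$ are non-empty, which is exactly the hypothesis of Proposition~\ref{r:AB_star}. That proposition then delivers two conclusions simultaneously. First, $T^\adj T$ and $TT^\adj$ are \emph{densely defined} -- this is the genuinely nontrivial point, since a priori $T^\adj T$ is only symmetric and may fail to have dense domain. Second, the adjoint product formulas hold \emph{with equality}: $(T^\adj T)^*=T^*(T^\adj)^*$ and $(TT^\adj)^*=(T^\adj)^*T^*$.

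It then remains to translate these Hilbert space identities back into the Krein space. Using $T^{**}=T$ and $J^*=J$ one has $(T^\adj)^*=(JT^*J)^*=JTJ$, while $S^\adj=JS^*J$ for any densely defined $S$. Hence the computation reads
$$(T^\adj T)^\adj = J(T^\adj T)^*J = J\,T^*(T^\adj)^*\,J = J T^*(JTJ)J = (JT^*J)\,T = T^\adj T,$$
and symmetrically
$$(TT^\adj)^\adj = J(TT^\adj)^*J = J\,(T^\adj)^*T^*\,J = J(JTJ)T^*J = T(JT^*J) = TT^\adj.$$
Together with the dense definedness secured above, these equalities say exactly that both $T^\adj T$ and $TT^\adj$ are selfadjoint in the Krein space, which is the content of (t1).

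The only real obstacle is the step delegated to Proposition~\ref{r:AB_star}: upgrading the generic inclusion $(T^\adj T)^*\supseteq T^*(T^\adj)^*$ to an equality, and, equivalently, establishing dense definedness of the two products. This is precisely where the non-emptiness of \emph{both} resolvent sets enters and cannot be dispensed with -- the elementary fact that $T^\adj T$ is always symmetric yields only $(T^\adj T)^\adj\supseteq T^\adj T$, not equality. Everything else is routine bookkeeping with the bounded involution $J$.
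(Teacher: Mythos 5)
Your proposal is correct and follows essentially the same route as the paper: fix a fundamental symmetry $J$, use Proposition~\ref{r:AB_star} (applied to $A=T^\adj$, $B=T$) to obtain dense definedness of the products and the equality $(T^\adj T)^*=T^*(T^\adj)^*$ in the associated Hilbert space, and then conjugate by $J$ to conclude Krein space selfadjointness. The only cosmetic difference is that you compute $(T^\adj)^*=JTJ$ explicitly, whereas the paper regroups the factors as $(JT^*J)(J(T^\adj)^*J)$; these are the same calculation.
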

\begin{proof}
Note that if the resolvent sets of both $T^\adj T$ and $TT^\adj$ are non-empty, then the domain of $T^\adj T$
is dense in $\calK$, by Proposition \ref{r:AB_star}. Let $J$ be any fundamental symmetry of the Krein space
and let $A^*$ denote the adjoint of a densely defined operator $A$ in the Hilbert space
$(\calK,[J\cdot,\cdot])$. Then from $A^\adj = JA^*J$ and Proposition \ref{r:AB_star} it follows that
$$
(T^\adj T)^\adj = J(T^\adj T)^*J = JT^*(T^\adj)^*J = \big(JT^*J\big)\big(J(T^\adj)^*J\big) = T^\adj T.
$$
Similarly,  $(TT^\adj)^\adj = TT^\adj.$
\end{proof}

Recall that  a well known sufficient condition for selfadjointness of a symmetric operator in a Krein space
is that both $\la$ and $\ol\la$ belong to its resolvent set for some $\lambda\in\Comp$.  Theorem
\ref{t:rho->sa}  provides another  sufficient condition for selfadjointness of  $T^\adj T$.



We formulate Theorem \ref{t:rho_AB} explicitly for the operators $T$ and $T^\adj$ as a separate result.

\begin{thm}\label{t:res}
Assume that {\rm (t2)} holds. Then we have
$$
\sigma(T^\adj T)\setminus\{0\} = \sigma(TT^\adj)\setminus\{0\},
$$
and there exists a constant $C > 0$ depending on $T$ only, such that for $\la,\mu\in\rho(T^\adj T)$, $\la\neq
0$, the following inequality holds
\begin{equation}\label{e:res_estimate}
\|(T^\adj T - \la)^{-1}\| \le \frac{CM_1(\lambda)M_2(\mu)}{|\la|}\,\Big( |\mu| + |\la -
\mu|\,(2+|\la|)(2+|\mu|)\Big),
\end{equation}
where $M_1(\lambda) := \max\{1,\|(TT^\adj - \la)^{-1}\|\}$ and $M_2(\mu) := \max\{1,\|(T^\adj T -
\mu)^{-1}\|\}$.
\end{thm}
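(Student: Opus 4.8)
The plan is to recognize Theorem \ref{t:res} as the special case of Theorem \ref{t:rho_AB} obtained by setting $A := T$ and $B := T^\adj$, so that $BA = T^\adj T$ and $AB = TT^\adj$. Under this identification the non-empty resolvent sets required in Theorem \ref{t:rho_AB} are precisely those postulated in {\rm (t2)}, namely $\rho(BA) = \rho(T^\adj T)\neq\emptyset$ and $\rho(AB) = \rho(TT^\adj)\neq\emptyset$. The spectral equality \eqref{HMM} then reads $\sigma(T^\adj T)\setminus\{0\} = \sigma(TT^\adj)\setminus\{0\}$, while the resolvent estimate \eqref{e:res_estimate_AB} becomes \eqref{e:res_estimate} verbatim, with the constant $C$ depending on $A$ and $B$, hence on $T$, only.

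The only hypothesis of Theorem \ref{t:rho_AB} that is not immediate is that both $A$ and $B$ are closed and densely defined operators in $\calK$. For $A = T$ this is the standing assumption of the section. For $B = T^\adj$ I would fix a fundamental symmetry $J$ of $(\calK,\product)$ and use the identity $T^\adj = JT^*J$, where $T^*$ is the adjoint of $T$ in the Hilbert space $(\calK,[J\cdot,\cdot])$ (the same identity employed in the proof of Theorem \ref{t:rho->sa}). Since $T$ is closed and densely defined, $T^*$ is closed and densely defined, and because $J$ is a bounded involution the same holds for $T^\adj = JT^*J$. Equivalently, $T^\adj$ is densely defined because its adjoint $T^{\adj\adj} = T$ exists.

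With these verifications in place the conclusion is a direct transcription of Theorem \ref{t:rho_AB}, and I do not expect any genuine obstacle: the entire content is already contained in that theorem, and the present statement merely isolates the case relevant to the Krein-space setting of this section. The one point deserving care is the bookkeeping of which operator plays the role of $AB$ and which of $BA$, so that the two maxima $M_1$ and $M_2$ are attached to the correct resolvents; fixing $BA = T^\adj T$ on the left-hand side of \eqref{e:res_estimate} determines this unambiguously, giving $M_1(\la) = \max\{1,\|(TT^\adj-\la)^{-1}\|\}$ and $M_2(\mu) = \max\{1,\|(T^\adj T-\mu)^{-1}\|\}$ exactly as stated.
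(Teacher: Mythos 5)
Your proposal is correct and coincides with the paper's treatment: the paper states Theorem \ref{t:res} explicitly as the specialization of Theorem \ref{t:rho_AB} to $A=T$, $B=T^\adj$ and offers no further argument. Your additional verification that $T^\adj$ is closed and densely defined (via $T^\adj=JT^*J$) and your check of the $M_1$/$M_2$ bookkeeping are exactly the right points to confirm.
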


\begin{rem}
It is not clear whether the condition (t1), weaker then (t2),  implies that the non-zero spectra of $T^\adj
T$ and $TT^\adj$ coincide. In view of Theorem \ref{t:res} we can formulate this ques\-tion as  the following
open problem:
\begin{quote}
Is it possible that (t1) holds and  $\rho(T^\adj T) = \emptyset$, while $\rho(TT^\adj)\neq\emptyset$?
\end{quote}
\end{rem}

\begin{cor}\label{c:small}
Assume that {\rm (t2)} is satisfied and that zero belongs to $\rho(T^\adj T)\cap\sigma(TT^\adj)$. Then zero
is a pole of order one of the resolvent of $TT^\adj$.

\end{cor}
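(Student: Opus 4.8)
The plan is to show that the resolvent of $TT^\adj$ grows no faster than $|\la|^{-1}$ as $\la\to0$, which at an isolated spectral point forces a pole of order at most one, and then to invoke $0\in\sigma(TT^\adj)$ to rule out order zero.

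First I would check that $0$ is an isolated point of $\sigma(TT^\adj)$. By Theorem \ref{t:res} the non-zero spectra of $T^\adj T$ and $TT^\adj$ coincide, and since $0\in\rho(T^\adj T)$ there is a punctured disc $0<|\la|<\veps$ contained in $\rho(T^\adj T)$, hence in $\rho(TT^\adj)$. Together with $0\in\sigma(TT^\adj)$ this means that $\la\mapsto(TT^\adj-\la)^{-1}$ is holomorphic on $0<|\la|<\veps$ with an isolated singularity at $0$, so one may speak of its pole order there.

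Next I would estimate the resolvent. Applying Theorem \ref{t:rho_AB} with the roles of the two factors interchanged, i.e.\ with $A:=T^\adj$ and $B:=T$ (both closed and densely defined, with $AB=T^\adj T$ and $BA=TT^\adj$, and both resolvent sets non-empty by (t2)), the estimate \eqref{e:res_estimate_AB} furnishes a constant $C>0$ such that for $\la,\mu\in\rho(TT^\adj)$ with $\la\neq0$
\begin{equation*}
\|(TT^\adj-\la)^{-1}\|\le\frac{CM_1(\la)M_2(\mu)}{|\la|}\Big(|\mu|+|\la-\mu|\,(2+|\la|)(2+|\mu|)\Big),
\end{equation*}
where now $M_1(\la)=\max\{1,\|(T^\adj T-\la)^{-1}\|\}$ and $M_2(\mu)=\max\{1,\|(TT^\adj-\mu)^{-1}\|\}$. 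Because $0\in\rho(T^\adj T)$, the factor $M_1(\la)$ stays bounded as $\la\to0$; fixing any $\mu\in\rho(TT^\adj)\setminus\{0\}$ keeps $M_2(\mu)$ equal to a constant and keeps the bracketed factor bounded near $0$. Hence $\|\la(TT^\adj-\la)^{-1}\|$ is bounded on a punctured neighbourhood of $0$.

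Finally I would pass from this norm bound to the pole order. Boundedness of $\la(TT^\adj-\la)^{-1}$ near the isolated singularity $0$ shows, by a standard removable-singularity argument applied to the scalar functions $\la\mapsto\phi\big(\la(TT^\adj-\la)^{-1}x\big)$ for $x\in\calK$ and $\phi\in\calK'$, that $\la(TT^\adj-\la)^{-1}$ extends holomorphically across $0$; equivalently, every Laurent coefficient of $(TT^\adj-\la)^{-1}$ of index $\le-2$ vanishes, so $0$ is a pole of order at most one. Since the resolvent set of a closed operator is precisely the set where its resolvent is holomorphic, $0\in\sigma(TT^\adj)$ forbids a removable singularity, and therefore $0$ is a pole of order exactly one. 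I expect the last step to be the main obstacle, since one must argue carefully that an operator-norm growth of order $|\la|^{-1}$ really does bound the pole order; I would handle it either through the weak-holomorphy/removable-singularity argument sketched here, or, as a cleaner alternative, by substituting the family $(T^\adj T-\la)^{-1}$ (holomorphic at $0$) into formula \eqref{ppp} read with $AB=T^\adj T$ and $BA=TT^\adj$, which exhibits $(TT^\adj-\la)^{-1}=\la^{-1}\big(\overline{T(T^\adj T-\la)^{-1}T^\adj}-I\big)$ explicitly as $\la^{-1}$ times a family holomorphic at $0$, the only delicate point then being the analyticity of the closure.
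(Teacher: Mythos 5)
Your argument is correct and is essentially the paper's own proof: the paper likewise deduces from Theorem \ref{t:res} that zero is an isolated singularity of the resolvent of $TT^\adj$ and then applies the resolvent estimate (in the symmetric form with $T$ and $T^\adj$ interchanged, exactly as you spell out) in a deleted neighbourhood of zero to conclude the pole has order one. Your write-up merely makes explicit the two routine steps the paper leaves implicit, namely the $O(|\la|^{-1})$ bound forcing the Laurent coefficients of index $\le -2$ to vanish and the use of $0\in\sigma(TT^\adj)$ to exclude a removable singularity.
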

\begin{proof}
It follows from Theorem \ref{t:res} that zero is an isolated spectral point of $TT^\adj$ and therefore an
isolated singularity of the resolvent of $TT^\adj$. Applying the estimate \eqref{e:res_estimate} for $\la$ in
a  deleted neighborhood of zero we see that it is a pole of order one.
\end{proof}

Let us recall the definition of the local sign type spectra of a selfadjoint operator in a Krein space.

\begin{defn}\label{d:spp}
Let $A$ be a selfadjoint operator in the Krein space $(\calK,\product)$. A point $\la\in\sigma(A)$ is called
a spectral point of {\em positive} {\rm (}{\em negative}{\rm )} {\em type of $A$} if $\la\in\sap(A)$ and if
for every sequence $(x_n)_{n=0}^\infty\subset\dom A$ with $\|x_n\| = 1$ and $(A -\la)x_n\to 0$ as
$n\to\infty$ we have
$$
\liminf_{n\to\infty}\, [x_n,x_n] > 0 \;\;\;\;
 \bigl(\,{\rm resp.\,}\, \limsup_{n\to\infty}\, [x_n,x_n] < 0 \bigr).
$$
We denote the set of all spectral points of positive {\rm (}negative{\rm )} type of $A$ by $\spp(A)$ {\rm
(}resp.\ $\smm(A)${\rm )}. A set $\Delta\subset\C$ is said to be {\it of positive {\rm (}negative{\rm )} type
with respect to $A$} if $\Delta\cap\sigma(A)\subset\spp(A)$ (resp.\ $\Delta\cap\sigma(A)\subset\smm(A)$). If
$\Delta$ is either of positive type or of negative type with respect to $A$, then we say that $\Delta$ is
{\it of definite type with respect to $A$}.
\end{defn}

It is well known that for a selfadjoint operator $A$ we have $\sigma(A)\cap\R\subset\sap(A)$. It was shown in
\cite{ajt,lmm} that $\sigma_{\pm\pm}(A)\subset\R$ and that for a closed interval $\Delta$ which is of
positive type with respect to $A$ there exists an open neighborhood $\calU$ in $\C$ of $\Delta$ such that
\begin{equation}\label{e:pppip_nbh}
\calU\cap\sigma(A)\cap\R\subset\spp(A) \quad\text{ and }\quad \calU\setminus\R\subset\rho(A).
\end{equation}
It was also shown that the resolvent $(A - \la)^{-1}$ for $\la$ near $\Delta$ does not grow faster than
$M/|\Im\la|$ with some constant $M > 0$. This fact gives rise to a local spectral function of $A$ on
$\Delta$. The spectral subspaces given by this spectral function are then Hilbert spaces with respect to the
inner product $\product$, cf. \cite{lmm}. An analogue holds for intervals of negative type with respect to
$A$.

In \cite{ajt} another type of spectral points of a selfadjoint operator in a Krein space was introduced,
namely the spectral points of type $\pi_+$ and $\pi_-$. The definition below is equivalent to that in
\cite{ajt}, see \cite[Theorem 14]{ajt}. We write $x_n\wto x$ as $n\to\infty$ if the sequence
$(x_n)_{n=0}^\infty\subset\calK$ converges weakly to some $x\in\calK$.

\begin{defn}\label{d:spip}
Let $A$ be a selfadjoint operator in the Krein space $(\calK,\product)$. A point $\la\in\sigma(A)$ is called
a spectral point {\em of type $\pi_+$} ({\it type $\pi_-$}) {\em of $A$} if $\la\in\sap(A)$ and if for every
sequence $(x_n)_{n=0}^\infty\subset\dom A$ with $\|x_n\| = 1$, $x_n\wto 0$ and $(A -\la)x_n\to 0$ as
$n\to\infty$ we have
$$
\liminf_{n\to\infty}\, [x_n,x_n] > 0 \;\;\;\;
 \bigl(\,{\rm resp.\,}\, \limsup_{n\to\infty}\, [x_n,x_n] < 0 \bigr).
$$
We denote the set of all spectral points of type $\pi_+$ (type $\pi_-$) of $A$ by $\spip(A)$ (resp.\
$\spim(A)$). A set $\Delta\subset\C$ is said to be {\it of type $\pi_+$ {\rm (}resp.\ type $\pi_-${\rm )}
with respect to $A$} if $\Delta\cap\sigma(A)\subset\spip(A)$ (resp.\ $\Delta\cap\sigma(A)\subset\spim(A)$).
\end{defn}

It was shown in \cite{bpt} (for a weaker statement see also \cite{ajt}) that if $\Delta$ is a closed interval
of type $\pi_+$ with respect to the selfadjoint operator $A$ in $\calK$ which contains an accumulation point
of the resolvent set of $A$, then -- just as in the case of an interval of definite type -- there exists a
neighborhood $\calU$ of $\Delta$ in $\C$ such that \eqref{e:pppip_nbh} holds with $\spp(A)$ replaced by
$\spip(A)$. Moreover, the set $\Delta\cap (\spip(A)\setminus\spp(A))$ is finite. The growth of $(A -
\la)^{-1}$ in $\calU\setminus\R$ can be estimated by some power of $|\Im\la|^{-1}$. Also in this case $A$
possesses a local spectral function on $\Delta$. The spectral subspaces are then Pontryagin spaces with
finite rank of negativity.

The following result generalizes Proposition 5.1 of \cite{rw}. By $\R^\pm$ we denote $\{x\in\R:\pm x>0\}$.

\begin{prop}\label{p:spectra}
Assume that {\rm (t2)} is satisfied. Then the following holds:
\begin{itemize}
\item[{\rm (i)}]   $\sap(T^\adj T)\setminus\{0\} = \sap(TT^\adj)\setminus\{0\}$,
\item[{\rm (ii)}]  $\sigma_{\pm\pm}(T^\adj T)\cap\R^+ = \sigma_{\pm\pm}(TT^\adj)\cap\R^+$,
\item[{\rm (iii)}] $\sigma_{\pm\pm}(T^\adj T)\cap\R^- = \sigma_{\mp\mp}(TT^\adj)\cap\R^-$,
\item[{\rm (iv)}]  $\sigma_{\pi_\pm}(T^\adj T)\cap\R^+ = \sigma_{\pi_\pm}(TT^\adj)\cap\R^+$,
\item[{\rm (v)}]   $\sigma_{\pi_\pm}(T^\adj T)\cap\R^- = \sigma_{\pi_\mp}(TT^\adj)\cap\R^-$.
\end{itemize}
\end{prop}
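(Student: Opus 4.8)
The plan is to obtain (i) for free and then bootstrap the sign-type statements from it via the two basic identities connecting the Krein-space inner product to the factorizations. Statement (i) is exactly Theorem \ref{p:spectra_AB} applied to $A=T$ and $B=T^\adj$, so that $AB=TT^\adj$ and $BA=T^\adj T$. For (ii)--(v) the starting point is the pair of identities
\[
[T^\adj T x,x]=[Tx,Tx]\ \ (x\in\dom(T^\adj T)),\qquad [TT^\adj z,z]=[T^\adj z,T^\adj z]\ \ (z\in\dom(TT^\adj)),
\]
both immediate from the defining relation $[Tu,y]=[u,T^\adj y]$ of the adjoint. Since $\sigma_{\pm\pm}$- and $\sigma_{\pi_\pm}$-type points are real, I may take $\la\in\R$; if $z_n\in\dom(TT^\adj)$ has $\|z_n\|=1$ and $(TT^\adj-\la)z_n\to0$, then, writing $w_n:=T^\adj z_n$, continuity of the inner product gives $[w_n,w_n]=[TT^\adj z_n,z_n]=\la[z_n,z_n]+o(1)$, and symmetrically with $T$ in place of $T^\adj$. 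It is this relation, and the sign of the factor $\la$, that will produce the correspondences and the swap $\pm\leftrightarrow\mp$ in (iii), (v).

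The core is to show that $w_n=T^\adj z_n$ is, after normalization, an approximate eigensequence of $T^\adj T$ at $\la$. The obstacle is regularity: a priori $(TT^\adj-\la)z_n$ need not lie in $\dom(T^\adj)$, and even when it does, $T^\adj$ is unbounded, so $T^\adj(TT^\adj-\la)z_n\to0$ is not automatic. I would remove this exactly as in the proof of Theorem \ref{p:spectra_AB}: fixing $\mu\in\rho(TT^\adj)\setminus\{0,\la\}$ and passing to $\hat z_n:=(\la-\mu)(TT^\adj-\mu)^{-1}z_n$, one has $\hat z_n\in\dom((TT^\adj)^2)$, $(TT^\adj-\la)\hat z_n\to0$ and $\|z_n-\hat z_n\|\to0$, whence $[z_n,z_n]-[\hat z_n,\hat z_n]\to0$; so we may assume $z_n\in\dom((TT^\adj)^2)$. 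Then $w_n=T^\adj z_n\in\dom(T^\adj T)$ and $(T^\adj T-\la)w_n=T^\adj(TT^\adj-\la)z_n$. Using that $(TT^\adj)(TT^\adj-\mu)^{-1}=I+\mu(TT^\adj-\mu)^{-1}$ is bounded, one gets $(TT^\adj)(TT^\adj-\la)\hat z_n\to0$ from $(TT^\adj-\la)z_n\to0$, and then the domination of $T^\adj$ by $TT^\adj$ (Lemma \ref{domin}) yields $(T^\adj T-\la)w_n\to0$. Boundedness of $\|w_n\|$ is again Lemma \ref{domin}, while the lower bound $\liminf\|w_n\|>0$ follows from $Tw_n=TT^\adj z_n=\la z_n+o(1)$ together with the domination estimate $\|Tw_n\|\le c(\|T^\adj T w_n\|+\|w_n\|)\le c(1+|\la|)\|w_n\|+o(1)$.

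With this in hand the conclusions are bookkeeping. For $\la>0$: given an \emph{arbitrary} approximate eigensequence $z_n$ of $TT^\adj$, the construction produces a normalized approximate eigensequence of $T^\adj T$ at $\la$; if $\la\in\spp(T^\adj T)$ this forces $\liminf[w_n,w_n]>0$, and then $[w_n,w_n]=\la[z_n,z_n]+o(1)$ with $\la>0$ gives $\liminf[z_n,z_n]>0$, so $\la\in\spp(TT^\adj)$. The reverse inclusion, and the negative-type case, come from interchanging $T$ and $T^\adj$ (using $T^{\adj\adj}=T$) and from reading off $\smm$ via $\limsup[\cdot,\cdot]<0$; this establishes (ii). For $\la<0$ the same relation $[w_n,w_n]=\la[z_n,z_n]+o(1)$ reverses the sign, which is precisely the swap in (iii). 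Statements (iv) and (v) are proved by the identical scheme restricted to sequences with $z_n\wto0$; the one extra point is that the transfer preserves weak convergence, because the regularized assignment $z_n\mapsto w_n$ equals the \emph{bounded} operator $(\la-\mu)T^\adj(TT^\adj-\mu)^{-1}$ (bounded again by Lemma \ref{domin}), and bounded operators are weak--weak continuous; the $\la<0$ swap $\pi_\pm\leftrightarrow\pi_\mp$ arises as before. The main obstacle throughout is the regularity/unboundedness issue handled by the resolvent regularization; once $w_n$ is known to be a genuine approximate eigensequence, the sign analysis is routine.
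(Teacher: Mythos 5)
Your proof is correct and follows essentially the same route as the paper's: (i) via Theorem \ref{p:spectra_AB}, then resolvent regularization of the approximate eigensequence, transfer by $T$ (resp.\ $T^\adj$), and the identity $[T^\adj Tx,x]=[Tx,Tx]$ to compare sign types, with the factor $\la$ producing the swap for $\la<0$. The only cosmetic differences are the direction of transfer (the paper maps $T^\adj T$-sequences to $TT^\adj$-sequences via $T$, which is your argument with $T$ and $T^\adj$ interchanged) and your slightly cleaner weak--weak continuity argument in (iv)--(v), where the paper instead passes to a weakly convergent subsequence and uses weak closedness of $T$.
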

\begin{proof}
Statement (i) is a direct consequence of Theorem \ref{p:spectra_AB}. To prove (ii) and (iii) consider
$\la\in\spp(TT^\adj)\setminus\{0\}$. Then $\la\in\sap(T^\adj T)$ by (i). Let $(x_n)_{n=0}^\infty
\subset\dom(T^\adj T)$ be a sequence with $\|x_n\| = 1$ and $(T^\adj T - \la)x_n\to 0$ as $n\to\infty$. Then
define
$$
y_n := (\la - \mu)(T^\adj T - \mu)^{-1}x_n \,\in\,\dom((T^\adj T)^2)
$$
with some $\mu\in\rho(T^\adj T)\setminus\{0\}$. This sequence satisfies $\liminf_{n\to\infty}\,\|Ty_n\| > 0$
and
$$
\lim_{n\to\infty}\,\|y_n - x_n\| = \lim_{n\to\infty}\,\|(T^\adj T - \la)y_n\| = \lim_{n\to\infty}\,\|(TT^\adj
- \la)Ty_n\| = 0.
$$
This can be seen with a very similar argumentation as in the proof of Theorem \ref{p:spectra_AB} (i) (with $A
= T$ and $B = T^\adj$). Hence,
\begin{align*}
\liminf_{n\to\infty}\,[x_n,x_n]
&= \liminf_{n\to\infty}\,[y_n,y_n]\\
&= \frac 1 \la \, \liminf_{n\to\infty}\,\Big([Ty_n,Ty_n] - [(T^\adj T - \la)y_n,y_n]\Big)\\
&= \frac 1 \la \, \liminf_{n\to\infty}\,[Ty_n,Ty_n],
\end{align*}
and similarly
$$
\limsup_{n\to\infty}\,[x_n,x_n] = \frac 1 \la \, \limsup_{n\to\infty}\,[Ty_n,Ty_n].
$$
Since $\la\in\spp(TT^\adj)$, we have
$$
 \limsup_{n\to\infty}\,[Ty_n,Ty_n] \ge \liminf_{n\to\infty}\,[Ty_n,Ty_n] > 0.
$$
This shows that $\la\in\spp(T^\adj T)$ if $\la > 0$ and $\la\in\smm(T^\adj T)$ if $\la < 0$.

To show that (iv) and (v) hold, let $\la\in\spip(TT^\adj)\setminus\{0\}$. Then the same argument as above
applies with the additional assumption that $(x_n)_{n=0}^\infty$ converges weakly to zero. It remains to show
that $(Ty_n)_{n=0}^\infty$ (or at least a subsequence) converges weakly to zero. Since $T(T^\adj T -
\mu)^{-1}$ is bounded, $(Ty_n)_{n=0}^\infty$ is bounded. It is therefore no restriction to assume that there
exists some $v\in\calK$ such that $Ty_n\wto v$ as $n\to\infty$. From $x_n\wto 0$ and $\|y_n - x_n\|\to 0$ we
conclude that $y_n\wto 0$ as $n\to\infty$. Since $T$ is (weakly) closed, $v$ equals zero.
\end{proof}

The definition of local definitizability was first formulated in  1986 in  \cite[Section 2.2]{j86} for
unitary operators. The version below  is taken from \cite{j03}, see also \cite{j88}. We denote the one-point
compactification of the real line and the complex plane by $\ol\R$ and $\ol\C$, respectively.

\begin{defn}\label{d:locdef}
Let $\Omega$ be a domain in $\ol\C$ which is symmetric with respect to $\ol\R$ with
$\Omega\cap\R\neq\emptyset$ such that $\Omega\cap\C^+$ and $\Omega\cap\C^-$ are simply connected. A
selfadjoint operator $A$ in $\calK$ is called {\em definitizable over \,$\Omega$} if the following holds:
\begin{itemize}
\item[{\rm (i)}]   The set $\sigma(A)\cap(\Omega\setminus\ol\R)$ does not have any accumulation points in $\Omega$ and consists of poles of the resolvent of $A$.
\item[{\rm (ii)}]  For each closed subset $\Delta$ of $\Omega\cap\ol\R$ there exist an open neighborhood $\calU$ of $\Delta$ in $\ol\C$ and numbers $m \geq 1$, $M > 0$ such that
$$
\|(A - \lambda)^{-1}\| \leq M\,\frac{(1 + |\la|)^{2m-2}}{|\mbox{{\rm Im}}\, \lambda |^{m}},\qquad
\lambda\in\calU\setminus\ol\R.
$$
\item[{\rm (iii)}] Each point $\lambda\in\Omega\cap\ol\R$ has an open connected neighborhood $I_{\lambda}$ in $\ol\R$ such that each component of $I_\lambda\setminus\{\la\}$ is of definite type with respect to $A$.
\end{itemize}
\end{defn}

%

In \cite[Theorem 3.6]{j03} it was proved that a selfadjoint operator $A$ in the Krein space $\calK$ is
definitizable if and only if it is definitizable over $\ol\C$. The following theorem was proved in \cite{rw}
for the special case $\Omega = \ol\C$.

\begin{thm}\label{t:locdef}
Assume that {\rm (t2)} holds and let $\Omega$ be an open domain in $\ol\C$ as in Definition {\rm
\ref{d:locdef}}. Then $T^\adj T$ is definitizable over $\Omega$ if and only if $TT^\adj$ is definitizable
over $\Omega$.
\end{thm}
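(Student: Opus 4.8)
The plan is to verify conditions (i)--(iii) of Definition \ref{d:locdef} for the operator $TT^\adj$ under the assumption that they hold for $T^\adj T$; the reverse implication then follows by interchanging the roles of $T$ and $T^\adj$, since both the hypothesis {\rm (t2)} and the conclusion are symmetric in the two operators. Throughout I would exploit three facts already at hand: the coincidence of the non-zero spectra together with the mutual resolvent estimate \eqref{e:res_estimate} (Theorem \ref{t:res}), the sign-type correspondence of Proposition \ref{p:spectra}, and the pole statement of Corollary \ref{c:small}. The guiding principle is that every available comparison is valid only away from the point zero, so that the entire difficulty is concentrated at $\la = 0$.

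Conditions (i) and (iii) transfer almost formally. Since $\Omega\setminus\ol\R$ contains no real point, Theorem \ref{t:res} gives $\sigma(TT^\adj)\cap(\Omega\setminus\ol\R) = \sigma(T^\adj T)\cap(\Omega\setminus\ol\R)$, so this set again has no accumulation point in $\Omega$; that each of its points is a pole of the resolvent of $TT^\adj$ follows because such a point is isolated in $\sigma(TT^\adj)$ and, by the version of \eqref{e:res_estimate} with $T$ and $T^\adj$ interchanged, the resolvent of $TT^\adj$ grows there at most as fast as that of $T^\adj T$ (up to factors that remain bounded near a non-zero point), which at a pole is polynomial in the distance to the point. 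For condition (iii) I would note that the components of $I_\la\setminus\{\la\}$ always lie in $\R^+\cup\R^-$, hence are bounded away from zero even when $\la=0$; Proposition \ref{p:spectra}(ii),(iii) then show that a set of definite type with respect to $T^\adj T$ is of definite type with respect to $TT^\adj$ (of the same sign on $\R^+$ and of the opposite sign on $\R^-$, but definite in either case), so the neighborhoods $I_\la$ furnished by the definitizability of $T^\adj T$ serve for $TT^\adj$ as well.

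It remains to establish the resolvent growth estimate (ii) for $TT^\adj$. Let $\Delta\subset\Omega\cap\ol\R$ be closed. If $0\notin\Delta$, I would choose a neighborhood $\calU$ of $\Delta$ bounded away from zero and apply the symmetric form of \eqref{e:res_estimate}, fixing one $\mu\in\rho(TT^\adj)$: the factor $\max\{1,\|(T^\adj T-\la)^{-1}\|\}$ is controlled by the definitizability estimate for $T^\adj T$, while the prefactor $|\la|^{-1}$ and the polynomial factors in $|\la|$ stay bounded on $\calU$; taking the maximum of the resulting exponents and constants yields a bound of the required form.

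The main obstacle is the case $0\in\Delta$, which is precisely the phenomenon the paper is built to handle. Here I would cover a neighborhood of $\Delta$ by a region away from zero (treated as above) and a small neighborhood of zero, and argue on the latter according to the behavior at the origin. If $0\in\rho(TT^\adj)$ there is nothing to prove near zero, since the resolvent is bounded there. If $0\in\rho(T^\adj T)\cap\sigma(TT^\adj)$, then Corollary \ref{c:small} tells us that zero is a pole of order one of the resolvent of $TT^\adj$, so $\|(TT^\adj-\la)^{-1}\|\le K|\la|^{-1}\le K|\Im\la|^{-1}$ for non-real $\la$ near zero, which is condition (ii) with $m=1$. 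Finally, if $0\in\sigma(T^\adj T)\cap\sigma(TT^\adj)$, I would again invoke the symmetric \eqref{e:res_estimate} with a fixed $\mu\in\rho(TT^\adj)$; the crucial point is that the singular prefactor $|\la|^{-1}$ can be absorbed using $|\la|\ge|\Im\la|$, so that the definitizability exponent $m$ for $T^\adj T$ is transferred to $TT^\adj$ at the cost of passing to $m+1$, which the definition permits. Combining the estimates over the two parts of the cover completes condition (ii), and hence the proof.
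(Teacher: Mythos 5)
Your proposal is correct and follows essentially the same route as the paper: conditions (i) and (iii) of Definition \ref{d:locdef} are transferred via Theorem \ref{t:res} and Proposition \ref{p:spectra}, and condition (ii) via the resolvent estimate \eqref{e:res_estimate}, absorbing the prefactor $|\la|^{-1}$ through $|\la|\ge|\Im\la|$ at the cost of raising $m$ to $m+1$. The only difference is that your case analysis at $\la=0$ (and the appeal to Corollary \ref{c:small}) is superfluous: the paper applies \eqref{e:res_estimate} uniformly on all of $\calU\setminus\ol\R$, which handles the origin and the point at infinity in one stroke, and this uniform version is also what you need anyway in your ``away from zero'' regime when $\Delta$ contains $\infty$, where the polynomial factors in $|\la|$ do not stay bounded but are absorbed into the increased exponent.
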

\begin{proof}

Let us assume that $TT^\adj$ is definitizable over $\Omega$. By Theorem \ref{t:res} and Proposition
\ref{p:spectra} the conditions (i) and (iii) in Definition \ref{d:locdef} are easily seen to be satisfied by
$T^\adj T$. Hence, it remains to check that condition (ii) holds for $T^\adj T$. Let $\Delta$ be a closed
subset of $\Omega\cap\ol\R$. Then, as $TT^\adj$ is definitizable over $\Omega$, there exist an open
neighborhood $\calU$ of $\Delta$ in $\ol\C$ and numbers $m\geq 1$, $M > 0$ such that
\begin{equation}\label{e:finite_growth}
\|(TT^\adj - \lambda)^{-1}\| \leq M\,\frac{(1 + |\la|)^{2m-2}}{|\Im\la|^m}
\end{equation}
holds for all $\lambda\in\calU\setminus\ol\R$. It is obviously no restriction to assume $M\ge 1$. Moreover,
as the sequence $(1 + |\la|)^{2n-2}/|\Im\la|^n$ is monotonically increasing for each $\la\in\C\setminus\R$,
we may assume $m\ge 2$, such that the right hand side of \eqref{e:finite_growth} is not smaller than $1$.

Fix some $\mu\in\rho(T^\adj T)\setminus\{0\}$. Then, by Theorem \ref{t:res} we have for
$\la\in\calU\setminus\ol\R$:
$$
\|(T^\adj T - \la)^{-1}\| \le \frac{D}{|\la|}\,\Big( |\mu| + |\la - \mu|\,(2+|\la|)(2+|\mu|)\Big)\,M_1(\la),
$$
with
\begin{align*}
M_1(\la) = \max\{1,\|(TT^\adj - \la)^{-1}\|\} \le M\,\frac{(1 + |\la|)^{2m-2}}{|\Im\la|^m}
\end{align*}
and some $D > 0$ depending on $T$ and $\mu$ only. Hence, with $c := |\mu|$ we obtain for all
$\la\in\calU\setminus\ol\R$ that
\begin{align*}
\|(T^\adj T - \la)^{-1}\|
&\le \frac{{\rm const}}{|\la|}\,\Big(c + (2+c)(c+|\la|)\,(2+|\la|)\Big)\,\frac{(1 + |\la|)^{2m-2}}{|\Im\la|^m}\\
&\le \frac{{\rm const}}{|\la|}\,\Big(1 + (c+|\la|)\,(2+|\la|)\Big)\,\frac{(1 + |\la|)^{2m-2}}{|\Im\la|^m}\\
&\le \frac{{\rm const}}{|\la|}\,\Big(1 + 2\max\{1,c\}(1+|\la|)^2\Big)\,\frac{(1 + |\la|)^{2m-2}}{|\Im\la|^m}\\
&\le \frac{{\rm const}}{|\la|}\,(1+|\la|)^2\,\frac{(1 + |\la|)^{2m-2}}{|\Im\la|^m}\\
&\le {\rm const}\,\frac{(1 + |\la|)^{2(m+1)-2}}{|\Im\la|^{m+1}},
\end{align*}
with some ${\rm const} > 0$ which is independent of $\la$.
\end{proof}

In the following let $A$ be a selfadjoint operator in $(\calK,\product)$ which is definitizable over some
domain $\Omega$. If $A$ is unbounded and infinity belongs to $\Omega$ then we say that infinity is a spectral
point of positive (negative) type of $A$ if both components of $I_\infty\setminus\{\infty\}$ (see Definition
\ref{d:locdef}) are of positive (resp.\ negative) type.

As is well known (see e.g. \cite{j03}), the operator $A$ possesses a local spectral function $E$ on
$\Omega\cap\ol\R$. The projection $E(\Delta)$ is always a bounded selfadjoint operator in the Krein space
$\calK$ and is defined for all finite unions $\Delta$ of connected subsets of $\Omega\cap\ol\R$ the endpoints
of which are of definite type with respect to $A$. We denote this system of sets by $\calR_\Omega(A)$. The
spectral points of definite type of $A$ can be characterized with the help of $E$: a point $\la\in\R$ is a
spectral point of positive (negative) type of $A$ if and only if for some open $\Delta\in\calR_\Omega(A)$
with $\la\in\Delta$ the space $(E(\Delta)\calK,\product)$ (resp.\ $(E(\Delta)\calK,-\product)$) is a Hilbert
space (cf.\ \cite[Theorem 2.15]{j03}).

In analogy to definitizable operators we say that a point $\la\in\Omega\cap\sigma(A)\cap\R$ (or $\la=\infty$
if $A$ is unbounded) is a {\it critical point} of $A$ if it is not a spectral point of definite type of $A$.
This is obviously equivalent to the fact that for any $\Delta\in\calR_\Omega(A)$ with $\la\in\Delta$ the
space $(E(\Delta)\calK,\product)$ is indefinite. The set of critical points of $A$ in $\Omega$ will be
denoted by $c_\Omega(A)$.

The critical point $\la$ of $A$ is called {\it regular} if there exists $c > 0$ such that for some (and hence
for any) $\Delta_0\in\calR_\Omega(A)$ with $\Delta_0\cap c_\Omega(A) = \{\la\}$ we have $\|E(\Delta)\|\le c$
for all $\Delta\in\calR_\Omega(A)$, $\Delta\subset\Delta_0$. If $\la$ is not regular it is called a {\it
singular} critical point. If for any $\Delta\in\calR_\Omega(A)$ with $\lambda\in\Delta$ the space
$(E(\Delta)\calK,\product)$ is not a Pontryagin space, then $\la$ is called an {\it essential} critical point
of $A$. If $\infty\in\Omega$ is a critical point then it is always essential. In \cite[Theorem 26]{ajt} it
was shown that a critical point $\la\neq\infty$ is essential if and only if it is neither of type $\pi_+$ nor
of type $\pi_-$.

The statements (i), (ii), (iv) and (v) of the following theorem were  proved in \cite{rw} for the case that
$T^\adj T$ and $TT^\adj$ are definitizable. It turns out that these results also hold when $T^\adj T$ and
$TT^\adj$ are only definitizable over some domain $\Omega$.

\begin{thm}\label{critical}
Assume that {\rm (t2)} holds and let $T^\adj T$ {\rm (}and hence also $TT^\adj${\rm )} be definitizable over
some domain $\Omega$ as in Definition {\rm \ref{d:locdef}}. Then for $\la\in\R\setminus\{0\}$ the following
statements hold:
\begin{itemize}
\item[{\rm (i)}]   $\la$ is a critical point of $T^\adj T$ if and only if it is a critical point of
$TT^\adj$;
\item[{\rm (ii)}]  $\la$ is a regular critical point of $T^\adj T$ if and only if it is a regular critical point of
$TT^\adj$;
\item[{\rm (iii)}] $\la$ is an essential critical point of $T^\adj T$ if and only if it is an essential critical point of
$TT^\adj$;
\end{itemize}
Moreover,
\begin{itemize}
\item[{\rm (iv)}] if zero is a singular critical point of $T^\adj T$ then zero belongs to $\sigma(TT^\adj)$;
\item[{\rm (v)}]  if infinity is a critical point of $T^\adj T$ then infinity is of definite type with respect to $TT^\adj$.
\end{itemize}
\end{thm}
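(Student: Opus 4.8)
The plan is to read parts (i) and (iii) directly off Proposition~\ref{p:spectra}, and to concentrate the real work in (ii). For (i), fix $\la\in\R\setminus\{0\}$; by Theorem~\ref{t:res} we have $\la\in\sigma(T^\adj T)$ if and only if $\la\in\sigma(TT^\adj)$, so it remains to compare definite type. If $\la>0$, part (ii) of Proposition~\ref{p:spectra} gives $\la\in\spp(T^\adj T)\Llra\la\in\spp(TT^\adj)$ and the same for $\smm$; if $\la<0$, part (iii) shows definiteness is preserved with only the sign interchanged. Either way $\la$ is of definite type for one operator exactly when for the other, so $\la\in c_\Omega(T^\adj T)\Llra\la\in c_\Omega(TT^\adj)$. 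Part (iii) is identical in spirit: by the result of \cite{ajt} quoted before the theorem, a finite critical point is essential precisely when it is neither of type $\pi_+$ nor of type $\pi_-$, and parts (iv)--(v) of Proposition~\ref{p:spectra} show that membership in $\spip\cup\spim$ is preserved (with the two labels merely swapped when $\la<0$), so ``neither $\pi_+$ nor $\pi_-$'' is preserved.

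The heart of the theorem is part (ii), and here the plan is to transport the local spectral function of $T^\adj T$ to that of $TT^\adj$ through $T$ itself. From the proof of Theorem~\ref{t:rho_AB} (with $A=T$, $B=T^\adj$) one extracts the intertwining relation
$$T(T^\adj T-z)^{-1}x=(TT^\adj-z)^{-1}Tx,\qquad x\in\dom T,\ z\in\rho(T^\adj T)\setminus\{0\}.$$
Fix a finite critical point $\la$ and choose a closed $\Delta\in\calR_\Omega(T^\adj T)$ with $\Delta\cap c_\Omega(T^\adj T)=\{\la\}$ that is bounded and bounded away from $0$; represent both projections as contour integrals $E_1(\Delta)=\frac{1}{2\pi i}\oint_\Gamma(T^\adj T-z)^{-1}\,dz$ and $E_2(\Delta)=\frac{1}{2\pi i}\oint_\Gamma(TT^\adj-z)^{-1}\,dz$ over a common contour $\Gamma\subset\Omega$ encircling $\Delta\cap\sigma$ and avoiding $0$. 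Integrating the intertwining relation and using closedness of $T$ yields $TE_1(\Delta')=E_2(\Delta')T$ on $\dom T$ for all $\Delta'\subset\Delta$ in $\calR_\Omega$. Setting $\calK_i:=E_i(\Delta)\calK$, I would show $V:=T|_{\calK_1}\colon\calK_1\to\calK_2$ is a topological isomorphism: it is bounded since $\calK_1\subset\dom(T^\adj T)$, it has a bounded left inverse because $T^\adj T|_{\calK_1}=(T^\adj|_{\calK_2})V$ is boundedly invertible (its spectrum lies in $\Delta$, away from $0$), and a bounded right inverse because $TT^\adj|_{\calK_2}=V(T^\adj|_{\calK_2})$ is invertible. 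The intertwining then upgrades to the conjugation $E_2(\Delta')|_{\calK_2}=VE_1(\Delta')V^{-1}$, whence
$$\|E_2(\Delta')\|\le\|V\|\,\|V^{-1}\|\,\|E_2(\Delta)\|\,\|E_1(\Delta')\|,\qquad\Delta'\subset\Delta.$$
Since the first three factors are fixed, uniform boundedness of $\|E_1(\Delta')\|$ transfers to $\|E_2(\Delta')\|$, so $\la$ is regular for $T^\adj T$ only if for $TT^\adj$, the converse following by exchanging $T$ and $T^\adj$. The main obstacle is precisely verifying that $V$ is a topological isomorphism and that the spectral functions genuinely intertwine; this is exactly where $\la\neq0$ enters, and once it is in place the norm comparison is immediate.

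For (iv) I would argue by contraposition: suppose $0\in\rho(TT^\adj)$. If also $0\in\rho(T^\adj T)$ then $0$ is not critical at all. Otherwise $0\in\sigma(T^\adj T)$, and since $\sigma(T^\adj T)\setminus\{0\}=\sigma(TT^\adj)\setminus\{0\}$ the point $0$ is isolated in $\sigma(T^\adj T)$; the version of Corollary~\ref{c:small} with the roles of $T^\adj T$ and $TT^\adj$ exchanged then shows $0$ is a pole of order one of $(T^\adj T-z)^{-1}$, so for a small $\Delta$ with $\Delta\cap\sigma(T^\adj T)=\{0\}$ the projection $E_1(\Delta)$ is the bounded Riesz projection and locally constant, forcing $\|E_1(\Delta')\|$ bounded; hence $0$ is not a singular critical point. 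For (v), $\infty$ being critical forces $T^\adj T$ (hence $TT^\adj$) unbounded with $\infty\in\Omega$; by Definition~\ref{d:locdef}(iii) the two components $(R,\infty)$ and $(-\infty,-R)$ of $I_\infty\setminus\{\infty\}$ are of definite types, say $s_+$ and $s_-$ for $T^\adj T$, so $\infty$ is critical for $T^\adj T$ exactly when $s_+\neq s_-$. By Proposition~\ref{p:spectra}(ii)--(iii) the type over $(R,\infty)$ is unchanged for $TT^\adj$ while that over $(-\infty,-R)$ is reversed, so the two components agree for $TT^\adj$ precisely when $s_+\neq s_-$; thus $\infty$ is of definite type for $TT^\adj$ exactly when it is critical for $T^\adj T$, which is even stronger than the stated implication.
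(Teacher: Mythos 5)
Your proposal is correct and follows essentially the paper's own route: (i), (iii) and (v) are read off Proposition \ref{p:spectra} (for (iii) via the characterization of essential critical points through $\spip$ and $\spim$), (iv) reduces to the fact that an isolated spectral point cannot be singular, and for (ii) you reconstruct the intertwining argument $TE_1(\Delta')=E_2(\Delta')T$ with $T|_{E_1(\Delta)\calK}$ a topological isomorphism, which is precisely the argument the paper delegates to the proof of Theorem 4.2(iii) of \cite{rw} (adapted to the local spectral function). The only cosmetic caveat is that for a real interval $\Delta$ the projection $E_i(\Delta)$ is an improper contour integral (the contour meets $\ol\R$ at the definite-type endpoints), not a genuine Riesz integral over a contour avoiding the spectrum, but the intertwining relation passes to that limit by closedness of $T$, so your argument stands.
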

\begin{proof}
The assertions (i), (iii) and (v) are immediate consequences of Proposition \ref{p:spectra}. The proof of
(ii) follows analogous lines as the proof of Theorem 4.2(iii) of \cite{rw}, with the use of the local
spectral function instead of the spectral function of a definitizable operator. We leave the details to the
reader. Point (iv) results from the equality of non-zero spectra of $T^\adj T$ and $TT^\adj$ and from the
fact that an isolated point of the spectrum cannot be a singular critical point.
\end{proof}

We conclude this paper with an example, in which the operator $T^\adj T$ is easily seen to be locally
definitizable, while $TT^\adj$ has a relatively complicated form.

\begin{ex}
Let $(\calK_0,\product)$ be an infinite--dimensional Krein space and let $\calK=\calK_0\times\calK_0$ with
the standard product indefinite metric. Let $T_0\in L(\calK_0)$ be such that $T_0^\adj T_0$ is locally
definitizable over some set $\Omega$ (see e.g. Example \ref{e:one}) and let $T_1\in L(\calK_0)$ be an
operator having a neutral range, which is  equivalent to  $T_1^\adj T_1=0$. Consider the operator
 $$
 T=\begin{pmatrix} T_0 & 0\\ T_1 & 0\end{pmatrix}\in L(\calK).
 $$
Then
 $$
 T^\adj T =\begin{pmatrix} T_0^\adj T_0 & 0\\ 0 & 0\end{pmatrix}\in L(\calK),
 $$
and it is clearly locally definitizable over $\Omega$. Although the operator $TT^\adj$ has a more complicated
form, namely
 $$
 T T ^\adj=\begin{pmatrix} T_0 T_0^\adj & T_0 T_1 ^\adj \\ T_1T_0^\adj & T_1T_1^\adj\end{pmatrix}\in L(\calK),
 $$
we know by Theorem \ref{t:locdef} that it is locally definitizable over $\Omega$ as well.

\end{ex}

\subsection*{Acknowledgment}
We would like to thank our colleagues Piotr Budzy\'nski and Carsten Trunk  for helpful discussions and
inspirations.

\end{document}